\def\d{\partial}
\newcommand{\F}[1]{\mathbb{\Lambda}^{{#1}}}
\newcommand{\Hd}[1]{H(d, \om, \F {#1})}
\newcommand{\RRR}{\mathbb{R}}
\newcommand{\KKK}{\mathbb{K}}
\newcommand{\MMM}{\mathbb{M}}
\newcommand{\skw}{\mathop{\text{skw}}}
\newcommand{\tr}[1]{{\mathrm{tr}^{(#1)}}}
\newcommand{\om}{\Omega}
\newcommand{\oh}{\Omega_h}
\newcommand{\veps}{\varepsilon}
\newcommand{\ds}{\displaystyle}
\newcommand{\p}[2]{[{#2}]^{(#1)}}
\newcommand{\Grad}{\mathop{\mathrm{Grad}}}
\renewcommand{\grad}{\mathop{{\mathrm{grad}}}}
\newcommand{\curl}{\mathop{\mathrm{{curl}}}}
\renewcommand{\div}{\mathop{\mathrm{{div}}}}
\newcommand{\Div}{\mathop{\mathrm{{Div}}}}
\newcommand{\Curl}{\mathop{\mathrm{Curl}}}
\newcommand{\vpi}{\varPi}
\newcommand{\DD}{\mathcal{D}}
\newcommand{\II}{\mathcal{I}}
\newcommand{\RR}{\mathcal{R}}
\renewcommand{\SS}{\mathcal{S}}
\newcommand{\BB}{\mathcal{B}}
\newcommand{\PP}{\mathcal{P}}
\newcommand{\vPi}{\varPi}
\newcommand{\Qhk}{Q_h^{(k)}}
\newcommand{\Qh}{Q_h}
\newcommand{\Dhk}{D_h^{(k)}}
\newcommand{\Dh}[1]{D_h^{({#1})}}
\newcommand{\Pk}{{P}^{(k)}}
\newcommand{\Pkm}{{P}^{(k-1)}}
\newcommand{\Bhk}{\mathcal{B}_h^{(k)}}
\newcommand{\Bh}[1]{\mathcal{B}_h^{(#1)}}
\newcommand{\Bhh}{\mathcal{B}_h}
\newcommand{\og}{\omega}
\newcommand{\Vnk}{V_h^{(0),k}}
\newcommand{\Bnk}{B^{(0),k}}
\newcommand{\Bn}[1]{B^{(0),{#1}}}
\newcommand{\Ank}{A^{(0),k}}
\newcommand{\An}[1]{A^{(0),{#1}}}
\newcommand{\Vhk}{\smash[t]{{V_h^{(k)}}}}
\newcommand{\Vh}[1]{{V_h^{({#1})}}}
\newcommand{\Vp}[1]{\mathcal{V}_h^{({#1})}}
\newcommand{\trg}[2]{\triangle({#1}, {#2})}
\newcommand{\Vt}{\tilde{V}}
\newcommand{\vt}{\tilde{v}}
\newcommand{\Rt}{\tilde{R}}
\newcommand{\vLt}{\tilde{\varLambda}}
\newcommand{\vL}{{\varLambda}}
\newcommand{\A}[1]{A^{({#1})}}
\newcommand{\B}[1]{B^{({#1})}}
\newcommand{\pihk}{\smash[t]{\vPi_h^{(k)}}}
\newcommand{\pih}[1]{\vPi_h^{({#1})}}
\newcommand{\pihh}{\vPi_h}
\newcommand{\diam}{\mathop{\text{diam}}}
\newcommand{\spn}{\mathop{\text{span}}}
\newcommand{\hK}{\hat K}
\newcommand{\dk}{d^{(k)}}
\newcommand{\dd}[1]{d^{({#1})}}
\newcommand{\dhh}[1]{d_h^{({#1})}}
\newcommand{\alphak}{\alpha_k}
\newcommand{\alphakm}{\alpha_{k-1}}
\newcommand{\piCurl}{\vPi_h^{\text{Curl}}}
\newcommand{\piDiv}{\vPi_h^{\text{Div}}}
\newcommand{\pidiv}{\vPi_h^{\text{div}}}
\newcommand{\rev}[1]{#1}
\title{The auxiliary space preconditioner  for the de Rham complex
\thanks{This work was performed under the auspices of the
  U.S. Department of Energy by Lawrence Livermore National Laboratory
  under Contract DE-AC52-07NA27344 and was also supported in part by
  the AFOSR under grant FA9550-17-1-0090 and by the ARO under US Army
  Federal Grant W911NF-15-1-0590. The initial numerical studies
  were facilitated by equipment acquired under NSF's Major Research
  Instrumentation grant DMS-1624776 and under the  
  Defense University Research Instrumentation Program provided  
  by the ARO Federal Grant W911NF-16-1-0307.}
}
\author{
J.~Gopalakrishnan\thanks{Portland State University, 
  PO Box 751 (MTH), Portland, OR 97207-0751,
  ({\tt gjay@pdx.edu})}
\and
M.~Neum\"uller\thanks{Institute of Computational Mathematics,
  Johannes Kepler University, 
         Altenbergerstr. 69, A-4040 Linz, Austria, 
         ({\tt neumueller@numa.uni-linz.ac.at})}
\and
P.~S. Vassilevski\footnotemark[2]
    \thanks{Center for Applied Scientific Computing, LLNL, Mail Stop L-561,
    Livermore, CA 94550, ({\tt panayot@llnl.gov})}
}
\begin{document}

\maketitle 

\begin{abstract}
  We generalize the construction and analysis of auxiliary space
  preconditioners to the $n$-dimensional finite element subcomplex of
  the de Rham complex.  These preconditioners are based on a
  generalization of a decomposition of Sobolev space functions into a
  regular part and a potential.  A discrete version is easily
  established using the tools of finite element exterior calculus.  We
  then discuss the four-dimensional de Rham complex in detail. By
  identifying forms in four dimensions (4D) with simple proxies, form
  operations are written out in terms of familiar algebraic operations
  on matrices, vectors, and scalars.  This provides the basis for our
  implementation of the preconditioners in 4D.  Extensive numerical
  experiments illustrate their performance, practical scalability, and
  parameter robustness, all in accordance with the theory.
\end{abstract}

\begin{keywords}
regular decomposition, HX preconditioner, 4D, skew-symmetric
  matrix fields, exterior derivative, proxies
\end{keywords}

\begin{AMS}
65F08, 65N30 
\end{AMS}

\pagestyle{myheadings}
\thispagestyle{plain}
\markboth{}{}

\section{Introduction}\label{section: Introduction}

The auxiliary space preconditioners for problems posed in $H(\curl)$
and $H(\div)$, initially studied
by Hiptmair and Xu \cite{HiptmXu07}, are now
well understood both theoretically and practically, in two and three
space dimensions.  These preconditioners have been used for
accelerating a wide variety of solution techniques, thanks to their
highly scalable parallel implementations, known as AMS and ADS
preconditioners (see the software libraries HYPRE~\cite{hypre} and
MFEM~\cite{mfem}).  The goal of the present work is two-fold.  First,
we generalize the mathematical design and analysis of these
preconditioners to $n$ dimensions. Second, we provide an
implementation of the preconditioners in 4D and detail the techniques
we used to transform 4D exterior calculus into matrix and vector
operations.

An important ingredient in the analysis of the auxiliary space
preconditioners in two and three dimensions was the so-called regular
decomposition, which splits a Sobolev space function into a component
of higher regularity and a scalar or vector potential. Such
decompositions were known early on~\cite{BirmaSolom90}.  But the key
to the success of the auxiliary space preconditioners was a discrete
version of this decomposition found in~\cite{HiptmXu07}, now also
known as the HX decomposition.  Its practical use was elaborated in
\cite{KolevVassi09} and \cite{KolevVassi12}, where slightly stronger
results were established (using~\cite{zhao_pasciak}) to prove robustness
of the solvers in a general setting involving a stiffness term and a
mass term weighted with a parameter.  Further solvers in $H(\curl)$
and $H(\div)$ were developed in~\cite{bochev_et_al} and
\cite{yunrong}.

One of the motivations for this work, especially our 4D
implementation, is the recent increased interest in spacetime
discretizations. In three space dimensions, they yield large linear
systems built on 4D meshes and discretizations.  Starting as early as
the eighties, literature on spacetime methods began to accumulate
\cite{BabuskaJanik89, BabuskaJanik90, Hansbo94,
  JohnsonNaevertPitkaeranta84, JohnsonSaranen86, VegtVen02}.  As
methods that parallelize only spatial degrees of freedom created
increasingly larger computational bottlenecks in temporal
simulations~\cite{FalgoFriedKolev14}, the potential for higher
scalability of the spacetime methods received more attention,
resulting in a resurgence of interest in recent years \cite{
  Andreev13, BabuskaJanik90, BankVassilevskiZikatanov17,
  LangerMooreNeumueller16, LarssonMolten17, Mollet14,
  NeumuellerSteinbach11, NeumuellerVassilevskiVilla2017,
  SchwabStevenson09, Smears16, Steinbach15, UrbanPatera14}.
Further reasons for pursuing spacetime
discretizations, such as limited regularity~\cite{DemkoGopalNagar17}
and spacetime adaptivity~\cite{GopalSepul17} have also been noted.
Among these reasons, perhaps the most relevant to this work is the
above-mentioned potential of spacetime methods to break through
temporal causality barriers when exploiting parallelism. However, this
potential is unlikely to be realized without highly scalable solvers.
In turn, spacetime solvers in 4D are unlikely to be developed without
a complete understanding of preconditioners for the norm generated by
each of the four canonical first order partial differential operators
in 4D. Herein lies one of our contributions. By  showing how to build
scalable preconditioners for the norm of all the first order Sobolev
spaces in 4D, we  provide building blocks for designing spacetime
solvers.

\rev{To describe a specific scenario illustrating the need for
  preconditioners in 4D, recall that conservation laws take the form
  $\div F = 0$ for some flux $F$ depending on the unknown fields.
  Here, ``div'' is the 4D spacetime divergence when the conservation
  law in posed in three space dimensions. One can construct a
  spacetime discretization for this equation, following along the
  lines of~\cite{NeumuellerVassilevskiVilla2017} for scalar
  conservation laws. The resulting system of equations, as shown
  in~\cite{NeumuellerVassilevskiVilla2017}, is of saddle-point
  form. Its leading blocks on the diagonal correspond to bilinear
  forms that are equivalent to the canonical norms arising from the 4D
  de Rham sequence.  Therefore, a block-diagonal preconditioner for
  that saddle point system is obtained using diagonal blocks
  consisting of preconditioners for the relevant canonical 4D
  norms. This shows an immediate impact of our preconditioners in
  Section~\ref{section: The preconditioner} on existing work.  Our
  later discussions on 4D implementation are also of immediate
  relevance to this example.  Indeed, one of the solvers considered in
  \cite{NeumuellerVassilevskiVilla2017} utilizes iterations in a
  divergence-free space, which benefits from explicit knowledge of
  that subspace. Our considerations in Section~\ref{sec:impl}
  characterize this subspace as $\Div$ of certain skew-symmetric
  matrix-valued functions (where $\Div$ defined later --see
  \eqref{eq:Div}-- is such that $\div \circ \Div$ applied to
  skew-symmetric matrix-valued functions vanishes).  Beyond these
  comments, we shall not dwell on further details of applications in
  this paper.}

The remainder of the paper is structured as follows.  We begin in
Section~\ref{section: Preliminaries} with the necessary background on
finite element exterior calculus and introduce the regular
decomposition in $n$-dimensions. This section also reviews a few new
tools available thanks to the recent intensive research on finite
element exterior calculus, such as the bounded cochain projections and
their commutativity and approximation properties.
Section~\ref{section: The preconditioner}, introduces the auxiliary
space preconditioner, which is the main object of this study.  After
its definition and complete analysis, we proceed to
Section~\ref{sec:impl}, which specializes the discussion to 4D
exterior calculus and presents techniques and identities used for the
implementation of the preconditioner and its 4D ingredients.
Section~\ref{sec:numer} contains a large set of numerical results
illustrating the scalable and robust performance of the method, all in
accordance with the theory.

\section{Preliminaries}\label{section: Preliminaries}

We use finite element exterior calculus, for which standard references
include~\cite{ArnolFalkWinth10, Hiptm99}. In this section, we
establish the exterior calculus notations used in this paper and
recall results pertinent for the analysis of preconditioners.

\subsection{Sobolev spaces of exterior forms}  \label{sec:sobolev}

First, we set notations for $k$-forms in $n$-dimensions
($0 \le k \le n$).  The set of increasing multi-indices with $k$
components is denoted by
$\II_k = \{ \alpha = (\alpha_1, \ldots, \alpha_k): 1 \le \alpha_1 <
\alpha_2 < \cdots \alpha_k \le n \}$.  
For $\alpha \in \II^k$ and
$x = (x^1, x^2, \ldots, x^n),$ we abbreviate the elementary $k$-form
$dx^{\alpha_1} \wedge dx^{\alpha_2} \wedge \cdots \wedge
dx^{\alpha_k}$ to simply $dx^\alpha$.  
The space of $k$-forms on
$\RRR^n$ is denoted by
$\F k = \{ \sum_{\alpha \in \II_k} c_\alpha dx^\alpha: c_\alpha \in
\RRR\}$ and its dimension is $n_k \equiv (\begin{smallmatrix} n \\ k
\end{smallmatrix})$.
Let $H^s(\om)$ denote the standard Sobolev space on any open
$\om \subset \RRR^n$. The Sobolev space of exterior $k$-forms is
defined by 
\[
H^s(\om, \F k) = \bigg\{ w = \sum_{\alpha \in \II_k} w_\alpha(x) \; dx^\alpha 
:\; w_\alpha \in H^s(\om) \bigg\}.
\]
Its norm is given by 
\begin{equation}
  \label{eq:normcomp}
  \| w \|_{H^s(\om, \F k) }^2 
= \sum_{\alpha \in \II_k} \| w_\alpha \|_{H^s(\om)}^2.  
\end{equation}
The above notation scheme generalizes to analogously define other
spaces of forms like $L^2(\om, \F k)$, $C(\om, \F k)$, etc.  Thus
$\DD'(\om, \F k)$ denotes the space of $k$-forms whose components
$\varphi_{i_1 \cdots i_k}$ are distributions in $\DD'(\om)$ (where
$\DD(\om)$ is the space of smooth compactly supported test functions).
The inner product and norm of $L^2(\om, \F k)$ is denoted simply by
$(\cdot,\cdot) $ and $\| \cdot \|$, respectively. In either case the
form degree $k$ will be understood from context.

Let $d \equiv \dk$ denote the $k$th exterior derivative, e.g., when
applied to $w = w_\alpha dx^\alpha \in H^1(\om, \F k)$, the exterior
derivative $dw$ is given by 
\begin{equation}
  \label{eq:dw}
dw = \sum_{i=1}^n  \d_i w_\alpha \;dx^i \wedge dx^\alpha,
\end{equation}
where $\d_i w_\alpha$ is the usual $i$th partial derivative
$\d w_\alpha / \d x^i$ of the scalar multivariate function $w_\alpha$.
In three dimensions, $d^0$ generates the familiar gradient, $d^1$
generates curl, and $d^2$ generates the divergence operator. In four
dimensions, the exterior derivative has analogous interpretations,
which are worked out in detail later \rev{in \S\ref{sec:impl}.}

We are interested in the Sobolev spaces
\[
\Hd k = \{ w \in L^2(\om, \F k): \; dw \in L^2(\om, \F {k+1} ) \}
\]
normed by 
\[
\| w \|_{H(d, \om, \F k)}^2 = \| w \|^2 
 + 
\| d w\|^2.
\]
\rev{Note that when $k=n$, this space coincides with $L^2(\om, \F n)$
  (since $d=0$ then). When $n=3$, these spaces coincide with 
  the familiar three spaces $H^1(\om)$,
  $H(\text{curl}, \om)$ and $H(\div, \om)$ for $k=0,1,$ and $2$,
  respectively. For $n=4$, the corresponding four spaces are studied
  in detail in Section~\ref{sec:impl}.}

\subsection{Regular decomposition}

From now on, within this section, we tacitly assume that $\om$ is an
open bounded domain that is starlike with respect to a ball $B$, by
which we mean that for any $x \in \om$, the convex hull of $x$ and $B$
is contained in $\om$. This assumption implies that topology of $\om$
is trivial, i.e., $\om$ is homotopy equivalent to a ball, and that the
boundary of $\om$ is Lipschitz.  \rev{Under this assumption, certain
  regularized versions of homotopy operators of Poincar{\'{e}} are 
  constructed  in \cite{MitreMitreMonni08} (where its called
  averaged Cartan-like operators) and \cite{CostaMcInt09} (where its
  called regularized Poincar{\'{e}}-type integral operators). In the
  proof below, we shall follow the notation of \cite{CostaMcInt09} and
  denote these by $R_k$.  We use them to obtain a decomposition of
  $H(d,\om, \F k)$ into a more regular part and a remaining potential,
  as stated next.}

\begin{theorem}[Regular decomposition]
  \label{thm:regular}
  For each integer $1 \le k \le n$, 
  there is a $C_1>0$ and continuous linear maps
  \begin{align*}
    \SS & : H( d, \om, \F k) \to H^{1}(\om, \F k),
    &
    \PP & : H(d, \om, \F k) \to H^{1}(\om, \F {k-1})
  \end{align*}
  such that  \rev{for all  $w \in H(d, \om, \F k)$},
  \[
  w = \SS w + d\PP w 
  \]
  and 
  \[
  \| \SS w \|_{H^1(\om, \F k)} \le C_1 \| d w \|,
  \qquad
  \| \PP w \|_{H^1(\om, \F {k-1})} \le C_1 \| w \|_{H(d,\om, \F k)}.
  \]
\end{theorem}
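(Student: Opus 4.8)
The plan is to build $\SS$ and $\PP$ directly from the regularized Poincar\'{e}-type operators $R_k$ of \cite{CostaMcInt09}, which are available precisely because $\om$ is starlike with respect to a ball. Recall that each $R_k$ maps $k$-forms to $(k-1)$-forms and enjoys two properties on which everything hinges. The first is the \emph{homotopy identity}: on a domain with trivial topology such as ours, for every $k$-form $w$ with $1 \le k \le n$,
\begin{equation*}
  d\, R_k w + R_{k+1}\, dw = w,
\end{equation*}
with no cohomology correction term, since the kernel of the relevant projection consists only of constants, which occur solely among $0$-forms and are excluded here by $k \ge 1$. The second is the \emph{smoothing property}: $R_j$ is bounded from $H^s(\om, \F j)$ into $H^{s+1}(\om, \F {j-1})$ for $s \ge 0$; in particular $R_j : L^2(\om, \F j) \to H^1(\om, \F {j-1})$ is continuous.

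Given these, I would simply set
\begin{align*}
  \PP w &:= R_k w, & \SS w &:= R_{k+1}\, dw.
\end{align*}
Since $w \in L^2(\om, \F k)$ and $dw \in L^2(\om, \F {k+1})$ by the definition of $\Hd k$, the smoothing property immediately places $\PP w \in H^1(\om, \F {k-1})$ and $\SS w \in H^1(\om, \F k)$, so both maps land in the asserted target spaces and are linear. The homotopy identity then reads exactly $w = \SS w + d\PP w$, which is the claimed decomposition. The edge case $k = n$ is automatic: there $dw = 0$ because $\F {n+1} = \{0\}$, whence $\SS w = 0$ and $w = d\PP w$.

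For the norm bounds, I would invoke continuity of the smoothing maps. Boundedness of $R_{k+1} : L^2(\om, \F {k+1}) \to H^1(\om, \F k)$ gives $\| \SS w \|_{H^1(\om, \F k)} = \| R_{k+1} dw\|_{H^1(\om, \F k)} \le C \| dw \|$, which is the first estimate. Boundedness of $R_k : L^2(\om, \F k) \to H^1(\om, \F {k-1})$ gives $\| \PP w \|_{H^1(\om, \F {k-1})} = \| R_k w \|_{H^1(\om, \F {k-1})} \le C \|w\| \le C \|w\|_{H(d, \om, \F k)}$, which is in fact slightly stronger than the required second estimate. Taking $C_1$ to be the larger of the two operator norms completes the argument.

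The point I want to stress is that no genuine analytic obstacle remains in this proof: the entire difficulty --- constructing homotopy operators that simultaneously satisfy the exact Poincar\'{e} identity and gain a full order of Sobolev regularity on a merely starlike (hence possibly nonsmooth) domain --- is precisely what is established in \cite{CostaMcInt09} (and \cite{MitreMitreMonni08}). The only care required here is the bookkeeping of form degrees and confirming that restricting to $k \ge 1$ removes the constant term from the homotopy identity, so that the decomposition is clean.
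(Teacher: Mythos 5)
Your proposal is correct and follows essentially the same route as the paper: both take the regularized Poincar\'{e}-type operators $R_k$ of Costabel--McIntosh, set $\PP w = R_k w$ and $\SS w = R_{k+1}\,dw$, read off the decomposition from the homotopy identity $d R_k w + R_{k+1} dw = w$ (valid for $k\ge 1$ on a starlike domain), and obtain both norm bounds from the $L^2 \to H^1$ continuity of the $R_j$. Your handling of the $k=n$ case (where $dw=0$ forces $\SS w = 0$ and $dR_n w = w$) also matches the paper's.
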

\begin{proof}
  \rev{The regularized Poincar{\'{e}}-type integral operators of
    \cite[Corollary~3.4]{CostaMcInt09} are continuous linear operators
    $R_k: L^2(\om, \F k) \to H^1(\om, \F {k-1})$ for all
    $k=1, 2, \ldots, n-1$ satisfying $d R_k u + R_{k+1} d u = u$ for
    all $u \in H(d, \om, \F k)$. Moreover, the results of
    \cite{CostaMcInt09} when $k = n$ also yield $d R_n u = u$.
    Therefore, setting $\PP = R_k$ and $\SS = R_{k+1}$ the result
    follows for all $k=1, 2, \ldots, n-1$. It also follows for $k=n$
    once we set $\SS =0$ and $\PP = R_n$.}
\end{proof}

\rev{We note that regular decompositions were also given
in~\cite[Theorem~5.2]{HiptmLiZhou12} and \cite[Lemma~5]{DemloHiran14},
but their results do not state the first inequality of
Theorem~\ref{thm:regular}, which we need in the ensuing analysis.}

\subsection{Interpolation into finite element spaces}

Recall the well-known finite element
subspaces~\cite{ArnolFalkWinth10,Hiptm99} of $\Hd k$.  Let $P_r$
denote the space of polynomials in $n$ variables of degree at most
$r$,
$P_r\F k = \{ \sum_{\alpha \in \II_k} p_\alpha dx^\alpha: p_\alpha \in
P_r\},$ and let $P_r^- \F k \subseteq P_r \F k$, for all integers
$r\ge 1$, be as defined in~\cite[\S5.1.3]{ArnolFalkWinth10}. Let $\oh$
denote a geometrically conforming shape-regular simplicial finite
element mesh of $\om$.  Let $h$ denote the maximal mesh diameter
$h = \max_{K \in \oh} \diam(K)$.  To simplify technicalities, we
assume that the mesh $\oh$ is quasiuniform, so the diameter of every
element is bounded above and below by some fixed constant multiples
of~$h$.  The standard finite element subspaces of $\Hd k,$ indexed by
maximal mesh element diameter $h,$ are
$\Vhk = \{ v \in \Hd k: v|_K \in P_r^- \F k $ for all $n$-simplices
$K$ that are elements of the mesh $\oh\}$.  The Lagrange finite
element space $\Vh 0$ will play a special role in our discussions.  We
now introduce three operators that map various functions into $\Vhk$
that will be used in the sequel.

\rev{The first operator we need is the $L^2$ projection.  Identifying the
$n_k$-fold product of $\Vh 0$ as a subspace of $H^1(\om, \F k)$, we
denote it by $\Vnk.$  Let $Q_h = \Qhk : L^2(\om, \F k) \to \Vnk$ be
defined by
$
(\Qh z, v_h) = (z, v_h)
$
for all $v_h \in \Vnk$. Then, it follows from~\cite{BrambPasciStein02}
that for any $v \in H^1(\om, \F k)$,
\begin{equation}
  \label{eq:Qh}
  | \Qh v |_{H^1(\om, \F k)}
  + h^{-1} \| \Qh v - v \| 
  \prec |v|_{H^1(\om, \F k)}.
\end{equation}}
Here and throughout, we write $A \prec B$ to indicate that the
quantities $A$ and $B$ satisfy $A \le C B$ with a constant $C$ that is
independent of $h$ (but may depend on the shape regularity of $\oh$).

The next operator is the finite element interpolant~$\vPi_h \equiv \pihk$, often
called the {\em canonical interpolant}. 
A standard set of degrees of
freedom of $P_r^- \F k$ is well known (see~\cite[Theorem~5.5]{ArnolFalkWinth10} or
\cite{Hiptm99}). It  defines the canonical finite element interpolant
$\vPi_h$ in the usual way. Although the domain of $\vPi_h$ is often
viewed
as contained in a general (sufficiently regular) Sobolev space,
an important point of departure in this paper is to 
view $\vPi_h$ as a bounded linear
operator on {\em discrete spaces}, namely
\[
\pihk : \Vnk \to \Vhk.
\]
Lemma~\ref{lem:Pi} below provide continuity and approximation
estimates for $\vPi_h$ on the above domain.

Since $\vPi_h$ is, in general, unbounded on $\Hd k$, ideas to construct
bounded projectors into $\Vhk$ were proposed in~\cite{Schob08b} and
its antecedents. Such projectors are now well
known~\cite{ArnolFalkWinth10} by the name ``bounded cochain
projectors,'' Denoting them by $\Bhk$, we recall the standard
result~\cite[Theorem~5.9]{ArnolFalkWinth10} that
$\Bhk: L^2(\om, \F k) \to \Vhk$ is a bounded projection satisfying
\begin{subequations}  \label{eq:BddCochainProj}
  \begin{gather}
    \label{eq:BddCochainProj-1}
    \| w - \Bhk w \| 
     \prec h^s \| w \|_{H^s(\om, \F k)}
    \\   \label{eq:BddCochainProj-commute}
    d \Bhk = \Bh{k-1} d
  \end{gather}
\end{subequations}
\rev{for all $0 \le s \le r$.}

As a final note on the notation, {\em we will omit the superscript $(k)$
indicating the form degree from any notation} when no confusion can
arise. For example, just as $d$ abbreviates~$\dk$, we shall use
$\BB_h$ for $\Bhk$ when the form degree $k$ can be understood from context.

\section{The preconditioner}\label{section: The preconditioner}

\subsection{Definition}
\label{Definition:Preconditioner}

Let $\tau>0$ and let $A \equiv \A k : \Vhk \to \Vhk$ denote the
operator defined by
\begin{equation}
  \label{eq:A}
  (\A k u, v) = \tau (u,v) + ( du, dv)  
\end{equation}
for all $u, v \in \Vhk$.  Algebraic multigrid preconditioners for
$\A k$, for any form degree $k$, can be built by generalizing the
ideas in~\cite{HiptmXu07} and \cite{KolevVassi09} as we shall see in
this section.

The norm generated by $A$ is defined by $\| u \|_A = (A u, u)^{1/2}$.
Given two closed subspaces $V, W$ of $L^2$ and a linear operator
$R: V \to W$ we use $R^t : W \to V$ to denote its Hilbert adjoint
defined by $( R^t w,v) = (w, R v)$ for all $w \in W$ and $ v \in V$.
Let $d_h$ denote the restriction of $d$ on $\Vhk$, i.e.,
$d_h: \Vhk \to \Vh {k+1}$.  Then its adjoint
$d_h^t: \Vh{k+1} \to \Vhk$ is calculated by the above-mentioned
definition.

We define the preconditioner $ B \equiv \B k : \Vhk \to \Vhk$ for
$k \ge 1$ by induction on $k,$ supposing that for $k=0$, we are given
a good preconditioner $\B 0: \Vh 0 \to \Vh 0$,  i.e., there exists a 
$\beta \ge 1$ such that
\begin{equation}
  \label{eq:beta}
  \beta^{-1} (\B 0w,w) \le ( (\A 0)^{-1} w,w) \le \beta (\B 0  w, w)    
\end{equation}
for all $w \in \Vh 0$.  
Of course, we have in mind practically useful scenarios where $\beta$ 
is completely independent of (or very mildly dependent on)  $\tau$ and $h$.
The supposition of~\eqref{eq:beta} is justified since there are
good algebraic preconditioners~\cite{HensoYang02} for the Dirichlet
operator (arising from $\A 0$).  Then the $n_k$-fold product of
$\B 0,$ denoted by $\Bnk: \Vnk \to \Vnk$ preconditions
$\Ank: \Vnk \to \Vnk$, the $n_k$-fold product of $\A 0$.  Our aim is
 to use this to precondition $\B k$ for~$k>0$.

We need one more ingredient, the operator $D_h \equiv \Dhk : \Vhk \to \Vhk$
defined by
\[
(D_h u, v) = (h^{-2} + \tau) ( u,v) 
\]
for all $u, v \in \Vhk$. Finally, we define the preconditioner by
\begin{equation}
  \label{eq:B}
  B \equiv \B k = D_h^{-1} + \vPi_h \Bnk \vPi_h^t + \tau^{-1}  d_h \B {k-1} d_h^t
\end{equation}
for all $ 1 \le k \le n$.  Clearly, a practical implementation of this
preconditioner would need implementations of $\vPi_h$, $d_h$, and
$\Bnk$. The latter has been amply clarified in the literature (see
e.g.~\cite{HensoYang02}). In Section~\ref{sec:impl}, we will provide
more details on the implementation of $\vPi_h$ and $d_h$ when $n=4$.

Note that when the last term in~\eqref{eq:B} is recursively expanded, 
a simplification occurs, i.e., we have
\begin{align}
  \nonumber 
  d_h \B {k-1} d_h^t
  & = 
    \dhh{k-1} \bigg[
    (\Dh{k-1})^{-1} + \pih{k-1} \Bn{k-1} (\pih{k-1})^t 
  \\\nonumber 
  & \qquad\qquad + \tau^{-1}  
    \dhh{k-2} \B {k-2} (\dhh{k-2})^t
    \bigg] (\dhh{k-1})^t
\\ \label{eq:7}
  & = 
    \dhh{k-1}\left[
    (\Dh{k-1})^{-1} + \pih{k-1} \Bn {k-1} (\pih{k-1})^t
    \right] (\dhh{k-1})^t
\end{align}
because $d_h^{(k-1)}d_h^{(k-2)} =0$.  Thus the cost of applying the
preconditioner $\B k$ (ignoring the cost of inversion of $D_h$ and
the application of $\pihh$) is dominated by the cost of applying
$\Bn{k-1}$ and $\Bn {k}$, i.e., the cost of applying  $\B 0$
\begin{align*}
  n_k + n_{k-1} 
& = 
\begin{pmatrix}
  n+1 \\ k
\end{pmatrix}
\end{align*}
times. This also shows that an implementation of $B$ using only the
above-mentioned nonzero terms would be more efficient than simply 
implementing~\eqref{eq:B} recursively.

\subsection{Analysis}

\rev{We now proceed to prove a discrete version of the regular
  decomposition (as stated in Lemma~\ref{lem:lowerbdprm} below).  To
  this end, in addition to Theorem~\ref{thm:regular}, we need bounds
  on $\pihk$.  By viewing $\pihk$ as an operator acting on discrete
  spaces (as already mentioned earlier), we are able to use
  conclusions from scaling and finite dimensionality arguments for
  $k$-forms, such as the next two lemmas. We shall briefly display a
  proof of one of them using Euclidean coordinates.} Let $\hK$ denote
the unit $n$-simplex. There is an affine homeomorphism
$\Phi_K : \hK \to K$ for any $n$-simplex~$K$. Let $h_K = \diam(K)$.
Suppose $v$ is a $k$-form in $ L^2( K, \F k)$. Its pullback under
$\Phi_K$ is a $k$-form on $\hK$ denoted by $\Phi_K^* v$.

\begin{lemma}[Inverse inequality] \label{lem:inverse}
  For all $v_h \in \Vhk$, 
  we have 
  $
  \| d v_h \| \prec h^{-1} \| v_h \|.
  $
\end{lemma}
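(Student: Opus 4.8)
The plan is to prove the inverse inequality by the standard scaling argument, reducing everything to the fixed reference simplex $\hK$ where all relevant quantities are finite-dimensional and hence comparable. Since both $\|dv_h\|$ and $\|v_h\|$ are computed element by element (the $L^2$ norms split as sums over mesh elements), it suffices to establish, for each $n$-simplex $K$ in $\oh$, the local estimate $\|dv_h\|_{L^2(K,\F{k+1})} \prec h_K^{-1} \|v_h\|_{L^2(K,\F k)}$, and then sum over $K$ using quasiuniformity to replace each $h_K$ by $h$ up to shape-regularity constants.

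First I would fix an element $K$ and pull back to the reference simplex via the affine map $\Phi_K:\hK\to K$. The key algebraic fact is that the exterior derivative commutes with pullback, $\Phi_K^*(dv_h) = d(\Phi_K^* v_h)$, so the pulled-back field $\hat v = \Phi_K^* v_h$ lies in the finite-dimensional polynomial space $P_r^-\F k$ on $\hK$ and its derivative $d\hat v\in P_{r-1}\F{k+1}$. On this single fixed finite-dimensional space, the seminorm $\hat v \mapsto \|d\hat v\|_{L^2(\hK)}$ and the norm $\hat v\mapsto \|\hat v\|_{L^2(\hK)}$ are both norms on the quotient by constants (or simply two norms on a finite-dimensional space), so by equivalence of norms in finite dimensions there is a constant $\hat C$, depending only on $n,k,r$ and $\hK$, with $\|d\hat v\|_{L^2(\hK)}\le \hat C\,\|\hat v\|_{L^2(\hK)}$.

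The remaining work is purely bookkeeping of the scaling factors introduced by the affine change of variables. Under $\Phi_K$, the Jacobian scales like $h_K$ in each coordinate direction, so the pullback of a $k$-form carries $k$ factors of (inverse) derivative scaling while the volume element $|\det D\Phi_K|$ contributes an overall factor comparable to $h_K^n$. Tracking these through the $L^2$ norms of a $k$-form and of the $(k+1)$-form $dv_h$, one finds that the extra exterior derivative in $dv_h$ produces exactly one additional power of $h_K^{-1}$ relative to $\|v_h\|$; the $k$-dependent and volume factors cancel in the ratio. Combining the scaled reference estimate with these factors yields $\|dv_h\|_{L^2(K)}\prec h_K^{-1}\|v_h\|_{L^2(K)}$, and shape regularity (controlling $|\det D\Phi_K|$ and $\|D\Phi_K\|$, $\|D\Phi_K^{-1}\|$ uniformly in terms of $h_K$) guarantees the hidden constant is independent of $K$.

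The main obstacle I anticipate is not the finite-dimensional norm equivalence, which is immediate, but rather carrying out the scaling computation cleanly for $k$-forms, since the pullback acts on the differential form basis $dx^\alpha$ in a more intricate way than on scalars; one must verify that the product of the component-function scalings and the wedge-factor scalings combines to give precisely the single net power $h_K^{-1}$ and that all $k$-dependent factors cancel between numerator and denominator. This is routine but requires care with the exterior-calculus change-of-variables formula rather than just the scalar chain rule.
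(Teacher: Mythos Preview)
Your proposal is correct and matches the approach the paper intends: the paper does not actually display a proof of this lemma, merely citing ``scaling and finite dimensionality arguments'' and then proving the companion scaling lemma (Lemma~\ref{lem:scale}) instead. Your argument is exactly that implied proof; in fact the scaling computation you flag as the main bookkeeping concern is precisely what Lemma~\ref{lem:scale} records, so once you invoke \eqref{eq:scal2} for both the $k$-form $v_h$ and the $(k{+}1)$-form $dv_h$, together with $\Phi_K^*(dv_h)=d(\Phi_K^*v_h)$ and norm equivalence on $P_r^-\F k(\hK)$, the factor $h_K^{-1}$ drops out immediately.
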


\begin{lemma}[Scaling of pullback] \label{lem:scale}
  For all $v \in L^2(K, \F k)$, 
  \begin{align}
    \label{eq:scal2}
    \| \Phi_K^* v \|_{L^2(\hK, \F k)}^2
             & \prec  h_K^{2k-n } \| v \|_{L^2(K, \F k)}^2 \prec 
               \| \Phi_K^* v \|_{L^2(\hK, \F k)}^2
\intertext{and for all $v \in H^1(K, \F k)$, }
    \label{eq:scal3}
    | \Phi_K^* v |_{H^1(\hK, \F k)}^2
             & \prec  h_K^{2 + 2 k - n} | v |_{H^1(K, \F k)}^2
               \prec     | \Phi_K^* v |_{H^1(\hK, \F k)}^2.
  \end{align}  
\end{lemma}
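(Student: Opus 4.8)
The plan is to reduce everything to elementary estimates on the constant Jacobian matrix of the affine map $\Phi_K$ and then exploit finite dimensionality. Write $\Phi_K(\hat x) = B_K \hat x + b_K$ with $B_K \in \RRR^{n\times n}$ invertible. Under the shape-regularity and quasiuniformity assumptions on $\oh$, standard estimates give $\|B_K\| \prec h_K$, $\|B_K^{-1}\| \prec h_K^{-1}$, and $|\det B_K| \sim h_K^n$. First I would recall that for $v = \sum_\alpha v_\alpha\, dx^\alpha \in L^2(K, \F k)$ the pullback takes the form $\Phi_K^* v = \sum_\beta \big(\sum_\alpha m_{\beta\alpha}\,(v_\alpha\circ\Phi_K)\big)\, d\hat x^\beta$, where each coefficient $m_{\beta\alpha}$ is a $k\times k$ minor of $B_K$ -- crucially a constant, since $\Phi_K$ is affine. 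From $\|B_K\|\prec h_K$ each such minor is bounded by $\prec h_K^k$.

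The main computation is then a single change-of-variables estimate for the $L^2$ case. Substituting $x = \Phi_K(\hat x)$, so that $d\hat x = |\det B_K|^{-1} dx \sim h_K^{-n} dx$, and using the constant minor bound $|m_{\beta\alpha}|\prec h_K^k$, I obtain
\[
\|\Phi_K^* v\|_{L^2(\hK,\F k)}^2 = \sum_\beta \int_{\hK} \Big|\sum_\alpha m_{\beta\alpha}(v_\alpha\circ\Phi_K)\Big|^2 d\hat x \;\prec\; h_K^{2k}\, h_K^{-n}\, \|v\|_{L^2(K,\F k)}^2,
\]
which is the first inequality of \eqref{eq:scal2}; the finitely many ($n_k$) components let me pass from the componentwise bound to the norm bound \eqref{eq:normcomp} with a constant depending only on $n$ and $k$. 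For the reverse inequality I would not repeat the argument but instead apply the bound just proved to the inverse map $\Phi_K^{-1}\colon K \to \hK$, whose linear part $B_K^{-1}$ satisfies $\|B_K^{-1}\|\prec h_K^{-1}$ and $|\det B_K^{-1}|\sim h_K^{-n}$. Writing $v = (\Phi_K^{-1})^*(\Phi_K^* v)$ and applying the upper bound with the roles reversed yields $\|v\|_{L^2(K,\F k)}^2 \prec h_K^{-2k}\, h_K^{n}\, \|\Phi_K^* v\|_{L^2(\hK,\F k)}^2$, which rearranges to the second inequality of \eqref{eq:scal2}.

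For the $H^1$-seminorm estimate \eqref{eq:scal3} the structure is identical, with one extra factor coming from the chain rule: since the minors $m_{\beta\alpha}$ are constant, $\nabla_{\hat x}(v_\alpha\circ\Phi_K) = B_K^{\top}(\nabla v_\alpha)\circ\Phi_K$, so differentiating $\Phi_K^* v$ contributes an additional $\|B_K\|\prec h_K$. After squaring this produces the extra factor $h_K^2$, turning $h_K^{2k-n}$ into $h_K^{2+2k-n}$; the reverse inequality again follows by applying the result to $\Phi_K^{-1}$, whose linear part contributes $\|B_K^{-1}\|\prec h_K^{-1}$.

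I expect the only real subtlety to be organizational rather than analytical. The pullback mixes components (each target component is a minor-weighted combination of all source components), so neither direction of the equivalence is a one-line change of variables on a single scalar. The clean way around this, which I would emphasize, is to prove only the upper bound in a form valid for a general invertible affine map and then obtain the lower bound for free by applying it to $\Phi_K^{-1}$; this avoids ever having to invert the minor matrix $(m_{\beta\alpha})$ explicitly. The fact that $\Phi_K$ is affine -- making the minors constant so that they factor out of every integral -- is what keeps the whole argument at the level of the scalar estimates for the components together with the spectral bounds on $B_K$ and $B_K^{-1}$.
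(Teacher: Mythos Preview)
Your proposal is correct and follows essentially the same approach as the paper's proof: both write the pullback in coordinates, bound the Jacobian entries (your ``minors of $B_K$'' are exactly the paper's products $\prod_l \partial x^{\alpha_l}/\partial \hat x^{i_l}$) by $h_K^k$, change variables to pick up the volume factor $h_K^{-n}$, and obtain the reverse inequality by applying the same bound to $\Phi_K^{-1}$. The only cosmetic difference is that the paper displays the $H^1$ computation in full and says the $L^2$ case is similar, whereas you do the reverse.
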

\begin{proof}
  Let $v = \sum_{\alpha \in \II_k} v_\alpha dx^\alpha$. 
  Its  pullback $\hat v = \Phi_K^* v$ when expanded in elementary form
  basis at any $\hat x \in \hK,$  takes the form
  \begin{align*}
    \hat v(\hat x)
    & = \sum_{\alpha \in \II_k} 
      \sum_{i_1=1}^n\cdots \sum_{i_k=1}^n
      v_{\alpha_1, \ldots, \alpha_k} (\Phi_K^{-1} \hat x)\, 
      \frac{\d x^{\alpha_1}}{\d \hat x^{i_1}}
      \frac{\d x^{\alpha_2}}{\d \hat x^{i_2}}
      \cdots 
      \frac{\d x^{\alpha_k}}{\d \hat x^{i_k}}
      \, 
      d\hat x^{i_1} \wedge  d \hat x^{i_2} \wedge \cdots \wedge d \hat x^{i_k}.
  \end{align*}
  \rev{Note that 
    $\| \d x^{\alpha_l}/ \d \hat
    x^{i_l}\|_{L^\infty(\hK)} \prec h_K.$} 
  To prove~\eqref{eq:scal3}, 
  applying~\eqref{eq:normcomp} but with
  norm replaced by seminorm,
  \begin{align*}
    | \hat v|_{H^1(\hK, \F k)}^2 
    & = \sum_{\beta \in \II_k}| \hat v_\beta|_{H^1( \hK, \F k)}^2
    \\
    & \prec 
      \sum_{\alpha \in \II_k} 
      \sum_{i_1=1}^n\cdots \sum_{i_k=1}^n
      \sum_{l=1}^n 
      \left\| 
      \frac{\d}{\d \hat x^l}\left( v_{\alpha_1, \ldots, \alpha_k} (\Phi_K^{-1} \hat x)\, 
      \frac{\d x^{\alpha_1}}{\d \hat x^{i_1}}
      \frac{\d x^{\alpha_2}}{\d \hat x^{i_2}}
      \cdots 
      \frac{\d x^{\alpha_k}}{\d \hat x^{i_k}}\right)
      \right\|^2_{L^2(\hK)}
    \\
    & \prec
    \sum_{\alpha \in \II_k} 
      \sum_{i_1=1}^n\cdots \sum_{i_k=1}^n
      \sum_{l=1}^n 
      \sum_{m=1}^n 
      \left\| 
      \frac{\d x^m}{\d \hat x^l}
      \d_m
      v_{\alpha_1, \ldots, \alpha_k}
      \frac{\d x^{\alpha_1}}{\d \hat x^{i_1}}
      \frac{\d x^{\alpha_2}}{\d \hat x^{i_2}}
      \cdots 
      \frac{\d x^{\alpha_k}}{\d \hat x^{i_k}}
      \right\|^2_{L^2(K)} \frac{ |\hat K| }{ | K|} 
    \\
    & \prec h_K^{2+ 2k}   \frac{ |\hat K| }{ | K|} 
      \sum_{\alpha \in \II_k}        \sum_{m=1}^n \| \d_m v_\alpha \|^2_{\rev{L^2(K)}}
      \;\prec\; h_K^{2 + 2k -  n} | v |_{H^1( \hK, \F k)}^2.
  \end{align*}
  \rev{The reverse inequality can be established by considering the inverse
  map $\Phi_K^{-1}$. The inequalities of~\eqref{eq:scal2} are proved
  similarly.}
\end{proof}

\begin{lemma}
  \label{lem:Pi}
  For all $v_h \in \Vnk$
  \begin{align}
    \label{eq:Pi-0}
    \| \vPi_h v_h \| 
    & \prec \| v_h \| 
      \\
    \label{eq:Pi-1}
    \| \vPi_h v_h - v_h \| 
    & \prec   h\| v_h \|_{H^1(\om, \F k)}
    \\
    \label{eq:Pi-2}
    \| d \vPi_h v_h \|
    & \prec \| v_h \|_{H^1(\om, \F k)}.
  \end{align}    
\end{lemma}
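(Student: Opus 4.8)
The plan is to reduce each of the three estimates to the reference simplex $\hK$ via the affine scaling relations of Lemma~\ref{lem:scale}, working element by element and summing at the end. The mechanism that makes this possible is that the canonical interpolant commutes with affine pullbacks: writing $\hat\vPi$ for the fixed canonical interpolant on $\hK$, one has $\Phi_K^*(\vPi_h v) = \hat\vPi(\Phi_K^* v)$ on each element $K$, because the defining degrees of freedom are preserved under pullback. Since the polynomial form spaces on $\hK$ are finite dimensional, $\hat\vPi$ is automatically $L^2$-bounded there, i.e.\ $\| \hat\vPi \hat v \|_{L^2(\hK, \F k)} \prec \| \hat v \|_{L^2(\hK, \F k)}$ for every polynomial $k$-form $\hat v$, and it reproduces constants since $P_0 \F k \subseteq P_r^- \F k$ for $r \ge 1$. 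These three facts -- pullback commutation, reference boundedness, and constant reproduction -- are the only ingredients beyond Lemma~\ref{lem:scale}.

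For \eqref{eq:Pi-0} I would set $\hat v_h = \Phi_K^* v_h$ and chain the scaling \eqref{eq:scal2} at form degree $k$ with reference boundedness, giving $\| \vPi_h v_h \|_{L^2(K)}^2 \prec h_K^{n-2k} \| \hat\vPi \hat v_h \|_{L^2(\hK)}^2 \prec h_K^{n-2k} \| \hat v_h \|_{L^2(\hK)}^2 \prec \| v_h \|_{L^2(K)}^2$, where the powers of $h_K$ cancel exactly. Summing over $K \in \oh$ yields \eqref{eq:Pi-0}.

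For \eqref{eq:Pi-1} I would run a Bramble--Hilbert argument. Because $\vPi_h$ reproduces any constant $k$-form $c$, on each $K$ we have $\vPi_h v_h - v_h = \vPi_h(v_h - c) - (v_h - c)$; bounding the first term by the element-wise version of \eqref{eq:Pi-0} gives $\| \vPi_h v_h - v_h \|_{L^2(K)} \prec \| v_h - c \|_{L^2(K)}$. Taking $c$ to be the component-wise mean of $v_h$ over $K$ and invoking the (scaled) Poincar\'e inequality, $\| v_h - c \|_{L^2(K)} \prec h_K | v_h |_{H^1(K, \F k)}$, and summation produces \eqref{eq:Pi-1}.

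For \eqref{eq:Pi-2} the naive scaling of $\| d \vPi_h v_h \|$ only reproduces the inverse inequality of Lemma~\ref{lem:inverse} and thus loses a factor $h_K^{-1}$; the decisive step is to subtract a constant before scaling. Since $d$ annihilates constant forms and $\hat\vPi$ reproduces them, $d \hat\vPi \hat v_h = d \hat\vPi(\hat v_h - \hat c)$, so reference boundedness followed by Poincar\'e on $\hK$ gives $\| d \hat\vPi \hat v_h \|_{L^2(\hK)} \prec \| \hat v_h - \hat c \|_{L^2(\hK)} \prec | \hat v_h |_{H^1(\hK)}$ with $\hat c$ the mean of $\hat v_h$. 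Using that pullback commutes with $d$, I would then scale the left-hand side by \eqref{eq:scal2} at degree $k+1$ and the right-hand side by \eqref{eq:scal3} at degree $k$; the exponents $h_K^{n-2k-2}$ and $h_K^{2+2k-n}$ cancel, leaving $\| d \vPi_h v_h \|_{L^2(K)} \prec | v_h |_{H^1(K, \F k)}$, and summation together with $| v_h |_{H^1} \le \| v_h \|_{H^1}$ finishes \eqref{eq:Pi-2}. I expect the main obstacle to be the careful bookkeeping of the $h_K$ exponents across the two different form degrees in \eqref{eq:scal2}--\eqref{eq:scal3}, together with the observation that subtracting a constant is precisely what converts the would-be inverse inequality into the sharp bound without a spurious power of $h$.
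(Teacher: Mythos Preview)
Your argument is correct. The treatments of \eqref{eq:Pi-0} and \eqref{eq:Pi-1} are essentially the paper's, modulo the cosmetic choice of applying Poincar\'e on $K$ versus on $\hK$.

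The only substantive divergence is in \eqref{eq:Pi-2}. You run a second Bramble--Hilbert argument on the reference element (subtract a constant, invoke finite-dimensional boundedness of $d\hat\vPi$, Poincar\'e, then scale back and check the exponents cancel). The paper instead uses the structural identity $d\,\pih{k} = \pih{k+1} d$, valid on polynomial forms, which immediately gives $\| d\vPi_h v_h \|_{L^2(K)} = \| \pih{k+1}(dv_h) \|_{L^2(K)} \prec \| dv_h \|_{L^2(K)} \le |v_h|_{H^1(K,\F k)}$ by the already-proved \eqref{eq:Pi-0} at form degree $k+1$. This bypasses all the exponent bookkeeping you flagged as the main obstacle and makes the proof a two-liner. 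Your route is self-contained in that it does not need the $d$-commutation of the canonical interpolant, but since that commutation is a basic property of these elements (and is used elsewhere in the paper anyway), the paper's shortcut is the cleaner option here.
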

\begin{proof}
  Let
  $\vPi_K : P_r\F k (K) \to P_r^-\F k (K)$ be the canonical
  interpolant on $K$, i.e., $\vPi_K v = (\vpi_h v)|_K$ for any
  $v \in \Vnk$. Recall that any $v$ in $ \Vnk \subset H^1(\om, \F k)$,
  when restricted to $K$,  lies in $P_r \F k$. It is 
  easy to check that for any $v \in P_r\F k$, 
  $\Phi_K^* \vPi_K v  = \vPi_{\hK} \Phi_K^* v.$
  Since $\vPi_{\hK} : P_r\F k \to P_r^-\F k (\hK)$ is a linear map
  between finite dimensional spaces, it is bounded. Using
  Lemma~\ref{lem:scale}, we have
  \begin{align*}
  \| \vPi_h v \|^2_{L^2(K, \F k)} 
    & \prec h^{n-2k} \| \Phi_K^* \vPi v \|_{L^2(\hK, \F k)}^2
      = h^{n-2k} \| \vPi_{\hK} \Phi_K^* v \|_{L^2(\hK, \F k)}^2
    \\
    & \prec h^{n-2k} \| \Phi_K^* v  \|_{L^2(\hK, \F k)}^2
      \prec h^{n-2k} h^{2k-n} \| v  \|_{L^2(K, \F k)}^2.
  \end{align*}
  When summed over all $K \in \oh$, this proves~\eqref{eq:Pi-0}.
  
  To prove~\eqref{eq:Pi-1}, we note that $ c - \vPi_{\hK} c = 0 $ for
  any constant function $c$. Hence choosing $c$ to be the mean value
  of $\Phi_K^* v$ on $\hK$,
  \begin{align*}
    \| \vPi_h v - v \|_{L^2(K, \F k)}^2
    & \prec h^{n-2 k} \| (\vPi_{\hK} - I) (\Phi_K^* v - c) \|_{L^2(\hK, \F k)}^2
     \\
    & \prec h^{n-2 k} \| \Phi_K^* v - c \|_{L^2(\hK, \F k)}^2
     \prec h^{n-2 k}  | \Phi_K^* v|_{H^1(\hK, \F k)}^2
    \\
    &\prec   h^{n-2 k} h^{2 + 2k - n}  | v|_{H^1(\rev{K,} \F k)}^2
  \end{align*}
  where we have again used Lemma~\ref{lem:scale}.  Summing over all
  elements, this proves~\eqref{eq:Pi-1}.

  Finally to prove~\eqref{eq:Pi-2}, we note that the canonical
  interpolant commutes with the exterior derivative when applied to
  smooth functions. In particular, on any $v \in P_r \F k (K)$, 
  we have $d \pih k v|_K = \pih {k+1} d v|_K$. Hence, using the
  already established~\eqref{eq:Pi-0}, 
  \[
  \| d \pihh v \|_{L^2(K, \F k)}^2 \prec \| d v \|_{L^2(K, \F k)}^2 
  \prec | v |_{H^1(K, \F k)}^2.
  \]
  Summing over all elements, this proves~\eqref{eq:Pi-2}.
\end{proof}

\begin{lemma}[Stable decomposition]  \label{lem:lowerbdprm}
  For any $u_h \in \Vhk$, there are functions $s_h \in \Vhk$, 
  $z_h \in \Vnk$, and $p_h \in \Vh{k-1}$ such that 
  \begin{equation}
    \label{eq:2}
  u_h = s_h + \vPi_h z_h + d p_h    
  \end{equation}
  and 
  \begin{equation}
  \label{eq:3}    
  \begin{aligned}
    (h^{-2} + \tau) \| s_h  \|^2
    &+ \tau \| p_h \|^2
    +
    \tau \| d p_h \|^2 
    \\
    & +  \tau \| z_h    \|^2 +  |z_h |_{H^1(\om, \F k)}^2
    & \prec\; \tau \| u_h \|^2 + (1+ \tau) \| du_h\|^2.
  \end{aligned}
\end{equation}
\end{lemma}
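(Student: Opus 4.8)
The plan is to lift the continuous regular decomposition of Theorem~\ref{thm:regular} to the discrete level, discretizing its two pieces with the two \emph{different} projectors available, and then to identify the remainder $s_h$ in closed form so that its smallness can be read off. First I would apply Theorem~\ref{thm:regular} to $w=u_h\in\Vhk$, writing $u_h=\sigma+d\varphi$ with $\sigma=\SS u_h\in H^1(\om,\F k)$ and $\varphi=\PP u_h\in H^1(\om,\F{k-1})$, so that $\|\sigma\|_{H^1(\om,\F k)}\prec\|du_h\|$ and $\|\varphi\|_{H^1(\om,\F{k-1})}\prec\|u_h\|_{H(d,\om,\F k)}$. The regular part $\sigma$ must be handed to $\vPi_h$, which we control only on the discrete space $\Vnk$; I would therefore set $z_h=\Qh\sigma\in\Vnk$ and take $\vPi_h z_h$ as the regular term. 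For the potential I would instead use the bounded cochain projection, setting $p_h=\Bh{k-1}\varphi\in\Vh{k-1}$, the point being that its commuting property \eqref{eq:BddCochainProj-commute} will control $dp_h$. Finally I would define $s_h=u_h-\vPi_h z_h-dp_h$, which lies in $\Vhk$ since each subtracted term does, so that \eqref{eq:2} holds by construction.

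The decisive computation is to rewrite $s_h$ without reference to the uncontrolled quantities $u_h$ and $d\varphi$. Using \eqref{eq:BddCochainProj-commute}, $dp_h=d\Bh{k-1}\varphi=\Bhk d\varphi$; since $d\varphi=u_h-\sigma$ and $\Bhk u_h=u_h$ (as $\Bhk$ projects onto $\Vhk$), this gives $dp_h=u_h-\Bhk\sigma$, whence
\begin{equation*}
  s_h=\Bhk\sigma-\vPi_h\Qh\sigma
     =(\Bhk\sigma-\sigma)+(\sigma-\Qh\sigma)+(\Qh\sigma-\vPi_h\Qh\sigma).
\end{equation*}
Each difference is an $O(h)$ perturbation: the first by \eqref{eq:BddCochainProj-1} with $s=1$, the second by \eqref{eq:Qh}, and the third by \eqref{eq:Pi-1} applied to $z_h=\Qh\sigma$ (whose $H^1$-norm is bounded via \eqref{eq:Qh} by $\|\sigma\|_{H^1(\om,\F k)}\prec\|du_h\|$). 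Hence $\|s_h\|\prec h\|du_h\|$. Because $h$ is bounded by the fixed diameter of $\om$, this yields $(h^{-2}+\tau)\|s_h\|^2\prec(1+\tau)\|du_h\|^2$, the most delicate contribution to \eqref{eq:3}.

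The remaining terms are then routine. For $z_h$, the $L^2$-stability of $\Qh$ and \eqref{eq:Qh} give $\tau\|z_h\|^2+|z_h|_{H^1(\om,\F k)}^2\prec(1+\tau)\|du_h\|^2$. For $p_h$, the $L^2$-boundedness of $\Bh{k-1}$ (the $s=0$ case of \eqref{eq:BddCochainProj-1}) gives $\|p_h\|\prec\|\varphi\|\prec\|u_h\|_{H(d,\om,\F k)}$, hence $\tau\|p_h\|^2\prec\tau(\|u_h\|^2+\|du_h\|^2)$; and $dp_h=\Bhk d\varphi$ together with $\|d\varphi\|=\|u_h-\sigma\|\prec\|u_h\|+\|du_h\|$ gives $\tau\|dp_h\|^2\prec\tau(\|u_h\|^2+\|du_h\|^2)$. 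Summing these with the bound on $s_h$ produces \eqref{eq:3}.

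I expect the main obstacle to be conceptual rather than computational: one must resist interpolating $\sigma$ directly, since $\vPi_h$ is unbounded on $H^1(\om,\F k)$, and instead route the regular part through $\Qh$ into $\Vnk$ while routing the potential through the cochain projection $\Bh{k-1}$ so that the commuting relation can be exploited. The crux is precisely the telescoped identity for $s_h$ above, in which the uncontrolled pieces $u_h$ and $d\varphi$ cancel and only $O(h)$ approximation errors remain; once that is in place, every bound follows from the stated stability and approximation estimates.
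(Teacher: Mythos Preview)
Your proposal is correct and is essentially the paper's proof: the same choices $z_h=\Qh\sigma$, $p_h=\Bh{k-1}\varphi$, $s_h=\Bhk\sigma-\vPi_h z_h$ (which you arrive at by defining $s_h$ as the remainder and simplifying via commutativity, while the paper simply applies $\Bhk$ to $u_h=\sigma+d\varphi$ and reads it off), and the same three-term telescoping of $s_h$ bounded by \eqref{eq:BddCochainProj-1}, \eqref{eq:Qh}, and \eqref{eq:Pi-1}.
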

\begin{proof}
  We apply Theorem~\ref{thm:regular}  to $u_h \in \Vhk \subset
  \Hd k$ to
  obtain
  \begin{gather}
    \label{eq:1}
    u_h = z + d p, 
    \\
    \label{eq:4}
    \| z \|_{H^1(\om, \F k)} \le C_1 \| d u_h \|, 
    \qquad \| p    \|_{H^1(\om, \F {k-1})}
    \le C_1  \| u_h \|_{H(d, \om,\F k)},
  \end{gather}
  where $z= \SS u$ and $p= \PP u$. \rev{Now let $z_h = \Qhk z \in \Vnk$.}
  Applying $\Bhk$ to both
  sides of~\eqref{eq:1} and using~\eqref{eq:BddCochainProj},
  \[
  u_h = \Bhh z + d \Bhh p.
  \]
  Then \eqref{eq:2} follows with 
  \[
  s_h = \Bhh z - \pihh z_h, \quad 
  p_h = \Bhh p
  \]
  and it only remains to prove the estimate~\eqref{eq:3}. 
  
  Observe that
  \begin{align*}
    \| z_h \|_{\A 0}^2 
    & = \tau \| z_h \|^2 + \| d^0 z_h \|^2 
      \prec \tau \| z \|^2 + | z|_{H^1(\om, \F k)}^2
      && \text{by \eqref{eq:Qh}}
    \\
    & \prec (1 + \tau) \,\rev{\| d u_h \|^2}
      && \text{by \eqref{eq:4}}.
    \\
    \tau \| p_h \|^2
    & = \tau \|\BB_h p\|^2   \le \tau \|p\|^2
      && \text{by \eqref{eq:BddCochainProj}}
    \\
    & \prec \tau  \|u_h\|^2 + \tau \| d u_h\|^2
      && \text{by \eqref{eq:4}}.
    \\
    \| d p_h \|^2 
    & = \| d \BB_h p \|^2 = \| \BB_h d p \|^2  \prec \| d p \|^2 
      && \text{by \eqref{eq:BddCochainProj}}
    \\
    & \prec  \| u_h\|^2 + \| z\|^2 
      && \text{by \eqref{eq:1}}
    \\
    & \prec \| u_h \|^2 + \| d u_h \|^2
      && \text{by \eqref{eq:4}}.
    \\
    \| s_h \|^2
    & \le ( \| \Bhk z - z \| + \| z - z_h \| + \| z_h - \pihk z_h \| )^2
      \\
    & \prec  h^2 \| z \|_{H^1(\om, \F k)}^2 \prec h^2 \| d u_h \|^2
  \end{align*}
  \rev{by \eqref{eq:BddCochainProj}, \eqref{eq:Qh}
    and~\eqref{eq:Pi-1}}.  Inequality~\eqref{eq:3} follows by
  combining these estimates.
\end{proof}

With the above lemmas, we are ready to conclude the analysis.  The
basis for the analysis of auxiliary space preconditioners is the
standard ``fictitious space lemma'' (see e.g.,
\cite{HiptmXu07,Nepom07,Xu96}) which we state without proof below in a
form convenient for us. Suppose we want to precondition a self-adjoint
positive definite operator $\vL$ on a finite-dimensional Hilbert space
$V$ using
\begin{enumerate}
\item a selfadjointpositive
definite operator $S: V \to V$ whose inverse is easy to apply, 
\item two ``auxiliary'' Hilbert
spaces $\Vt_1$ and $\Vt_2$ and linear operators $\Rt_i: \Vt_i \to V$,
and 
\item two further selfadjointpositive definite
operators $\vLt_i : \Vt_i \to \Vt_i$ on the auxiliary spaces whose inverses are easy to apply.
\end{enumerate}
In this setting, the following result guides the preconditioner
design. \rev{Here, 
  we denote  norms generated by selfadjoint
  positive definite  operators
  in accordance with our prior notation scheme, 
  e.g., $\| w \|_{\vLt_i} = (\vLt_i w, w)_{\Vt_i}^{1/2}$.}

\bigskip

\begin{lemma}[Nepomnyaschikh lemma] \label{lem:N}
  Suppose there are positive constants $c_1, c_2, c_s >0$ such that for all
  $\vt_j \in \Vt_j,$ $j=1,2,$ and $v \in V$,
  \begin{equation}
    \label{eq:Rupper}
    \| \Rt_1 \vt_1\|_\vL \le c_1 \| \vt \|_{\vLt_1},
    \qquad 
    \| \Rt_2\vt_2\|_\vL \le c_2 \| \vt \|_{\vLt_2},
    \qquad 
    \| v \|_\vL \le c_s \| v\|_S.    
  \end{equation}
  Suppose also that given any $v \in V$ there are $s \in V$,
  $\vt_i \in \Vt_i$ such that $s + \Rt_1\vt_1 + \Rt_2 \vt_2 = v $ and
  \begin{align}
\label{eq:Rlower}
    \| s\|_S^2 + 
    \| \vt_1 \|_{\vLt_1}^2 + \| \vt_2 \|_{\vLt_2}^2 
    & 
      \le c_0^2 \| v \|_\vL^2.
  \end{align}
  Then
  $P = S^{-1} + \Rt_1 \vLt_1^{-1} \Rt_1^t + \Rt_2 \vLt_2^{-1} \Rt_2^t$
  preconditions $\vL$ and the spectrum of $P \vL$ is contained in the
  interval $ [c_0^{-2}, c_1^2 + c_2^2+c_s^2]$.
\end{lemma}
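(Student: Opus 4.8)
The plan is to reduce the spectral statement to a generalized Rayleigh quotient and then exploit a variational (additive Schwarz) characterization of the energy norm induced by $P^{-1}$. Since $\vL$ and $P$ are both selfadjoint and positive definite, the operator $P\vL$ shares its eigenvalues with the symmetric definite pencil $\vL u = \lambda P^{-1} u$ (multiply by $P$ to recover $P\vL u = \lambda u$). Hence its spectrum is real, positive, and contained in $[\lambda_{\min}, \lambda_{\max}]$ with
\[
\lambda_{\min} = \inf_{u \neq 0} \frac{\| u \|_\vL^2}{(P^{-1} u, u)}, \qquad \lambda_{\max} = \sup_{u \neq 0} \frac{\| u \|_\vL^2}{(P^{-1} u, u)} .
\]
Everything therefore reduces to sandwiching $(P^{-1} u, u)$: the lower spectral bound $\lambda_{\min} \ge c_0^{-2}$ will follow from $(P^{-1} u, u) \le c_0^2 \| u \|_\vL^2$, while the upper bound $\lambda_{\max} \le c_1^2 + c_2^2 + c_s^2$ will follow from $\| u \|_\vL^2 \le (c_1^2 + c_2^2 + c_s^2)(P^{-1} u, u)$.

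The key step, and the step I expect to be the main obstacle, is the variational identity
\[
(P^{-1} u, u) = \min\big\{ \| s \|_S^2 + \| \vt_1 \|_{\vLt_1}^2 + \| \vt_2 \|_{\vLt_2}^2 \; : \; s + \Rt_1 \vt_1 + \Rt_2 \vt_2 = u \big\}
\]
valid for every $u \in V$. To establish it I would put $S^{-1}$ on an equal footing with the auxiliary terms by writing $S^{-1} = \Rt_0 \vLt_0^{-1} \Rt_0^t$ with $\Rt_0 = \mathrm{id}_V$ and $\vLt_0 = S$, so that $P = \sum_{i=0}^2 \Rt_i \vLt_i^{-1} \Rt_i^t$. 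Minimizing $\sum_i \| \vt_i \|_{\vLt_i}^2$ subject to the linear constraint $\sum_i \Rt_i \vt_i = u$ is a strictly convex quadratic program; its stationarity (Lagrange) conditions read $\vLt_i \vt_i = \Rt_i^t \mu$ for a common multiplier $\mu \in V$, whence $\vt_i = \vLt_i^{-1} \Rt_i^t \mu$, and the constraint forces $P \mu = u$, i.e.\ $\mu = P^{-1} u$. Substituting back, the minimal value telescopes to $\sum_i (\Rt_i \vLt_i^{-1} \Rt_i^t \mu, \mu) = (P \mu, \mu) = (P^{-1} u, u)$, which is the claimed identity. The delicate bookkeeping here is confirming that $P$ is genuinely selfadjoint positive definite (so that $\mu$ is well defined and the program has a unique minimizer); this rests only on each $\vLt_i$ being positive definite and $\Rt_0 = \mathrm{id}$ making the sum coercive.

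With the identity in hand the two spectral bounds are immediate. For the lower bound, hypothesis~\eqref{eq:Rlower} furnishes, for each $u$, one admissible decomposition whose energy is at most $c_0^2 \| u \|_\vL^2$; since $(P^{-1} u, u)$ is the minimum over all admissible decompositions, $(P^{-1} u, u) \le c_0^2 \| u \|_\vL^2$, giving $\lambda_{\min} \ge c_0^{-2}$. For the upper bound, take an arbitrary decomposition $u = s + \Rt_1 \vt_1 + \Rt_2 \vt_2$; the triangle inequality together with the continuity estimates~\eqref{eq:Rupper} yields
\[
\| u \|_\vL \le c_s \| s \|_S + c_1 \| \vt_1 \|_{\vLt_1} + c_2 \| \vt_2 \|_{\vLt_2},
\]
and a discrete Cauchy--Schwarz inequality bounds the right-hand side by $(c_s^2 + c_1^2 + c_2^2)^{1/2}\,(\| s \|_S^2 + \| \vt_1 \|_{\vLt_1}^2 + \| \vt_2 \|_{\vLt_2}^2)^{1/2}$. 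Minimizing over decompositions and invoking the variational identity gives $\| u \|_\vL^2 \le (c_s^2 + c_1^2 + c_2^2)(P^{-1} u, u)$, that is $\lambda_{\max} \le c_1^2 + c_2^2 + c_s^2$. Combining the two bounds places the spectrum of $P\vL$ in $[c_0^{-2}, c_1^2 + c_2^2 + c_s^2]$, as asserted.
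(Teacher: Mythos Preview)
The paper does not prove this lemma; it explicitly states it ``without proof'' and cites \cite{HiptmXu07,Nepom07,Xu96} for the standard fictitious space argument. Your proof is correct and is essentially the standard one: the additive Schwarz variational identity $(P^{-1}u,u) = \min\{\sum_i \|\vt_i\|_{\vLt_i}^2 : \sum_i \Rt_i \vt_i = u\}$ (with the smoother folded in via $\Rt_0=\mathrm{id}$, $\vLt_0=S$), followed by the stable decomposition for the lower spectral bound and the boundedness estimates plus Cauchy--Schwarz for the upper bound. There is nothing to compare against in the paper itself.
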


\bigskip

\begin{theorem}
  \label{thm:main}
  Let $0 < \tau \,\rev{\prec 1}$ and let
  $A$ and $B$ be defined by \eqref{eq:A} and~\eqref{eq:B},
  respectively.  Suppose~\eqref{eq:beta} holds. 
  Then for each $1 \le k \le n-1$, there is an
  $\alpha\ge 1$ independent of $h$ and $\tau$ such that spectral
  condition number of $BA$ satisfies
  \[
  \kappa( BA) \le \alpha^2 \beta^2.
  \]
\end{theorem}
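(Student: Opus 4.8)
The plan is to deduce the result from the fictitious space lemma (Lemma~\ref{lem:N}), matching the three summands of~\eqref{eq:B} to the three summands of the abstract preconditioner $P$. I would take $\vL = \A k$ on $V = \Vhk$, use the smoother $S = \Dhk$, and introduce the two auxiliary spaces $\Vt_1 = \Vnk$, with $\Rt_1 = \pihk$, and $\Vt_2 = \Vh{k-1}$, with $\Rt_2 = d_h$. It is convenient to first analyze the \emph{ideal} preconditioner $P_\star$ obtained from~\eqref{eq:B} by replacing $\Bnk$ with $(\Ank)^{-1}$ and $\B{k-1}$ with $(\A{k-1})^{-1}$; this corresponds in Lemma~\ref{lem:N} to the exact choices $\vLt_1 = \Ank$ and $\vLt_2 = \tau\,\A{k-1}$, so that $\|z\|_{\vLt_1}^2 = \tau\|z\|^2 + |z|_{H^1(\om,\F k)}^2$ and $\|p\|_{\vLt_2}^2 = \tau\,(\A{k-1}p,p)$. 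The stable decomposition $u_h = s_h + \pihk z_h + d p_h$ from Lemma~\ref{lem:lowerbdprm} is exactly the splitting $v = s + \Rt_1\vt_1 + \Rt_2\vt_2$ required there.

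For $P_\star$ all four constants in Lemma~\ref{lem:N} are $O(1)$, independent of $h$, $\tau$, and $k$. The smoother bound $\|v\|_{\A k}\le c_s\|v\|_{\Dhk}$ follows from the inverse inequality (Lemma~\ref{lem:inverse}), since $\tau\|v\|^2 + \|dv\|^2 \prec (h^{-2}+\tau)\|v\|^2$. For $\Rt_1 = \pihk$ I would combine~\eqref{eq:Pi-0} with the \emph{seminorm} form of~\eqref{eq:Pi-2} --- which the proof of Lemma~\ref{lem:Pi} actually yields, because $d$ involves only first derivatives --- to get $\|\pihk z\|_{\A k}^2 \prec \tau\|z\|^2 + |z|_{H^1(\om,\F k)}^2 = \|z\|_{\vLt_1}^2$. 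For $\Rt_2 = d_h$ the bound is immediate from $d\circ d = 0$: indeed $\|d_h p\|_{\A k}^2 = \tau\|dp\|^2 \le \tau\,(\A{k-1}p,p) = \|p\|_{\vLt_2}^2$. The lower bound~\eqref{eq:Rlower} is read off from~\eqref{eq:3}: using $0<\tau\prec1$ to bound its right-hand side by $\|u_h\|_{\A k}^2$, its left-hand side dominates $\|s_h\|_{\Dhk}^2 + \|z_h\|_{\vLt_1}^2 + \|p_h\|_{\vLt_2}^2$ (note that $\tau^2\|p_h\|^2\le\tau\|p_h\|^2$). Hence $\kappa(P_\star\A k)\le\alpha^2$ with $\alpha$ independent of $h$, $\tau$, and $k$; this $\alpha$ is the constant appearing in the theorem.

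It remains to pass from $P_\star$ to $B = \B k$, and this is where the only $\beta$-dependence enters. The strategy is to show that each summand of $B$ lies within a factor $\beta$ of the corresponding summand of $P_\star$ as a quadratic form; since all summands are positive semidefinite this gives $\beta^{-1}(P_\star w,w)\le(Bw,w)\le\beta(P_\star w,w)$, and therefore $\kappa(BA) \le \beta^2\,\kappa(P_\star\A k) \le \alpha^2\beta^2$. The middle summand is clear, because $\Bnk$ inherits from~\eqref{eq:beta} the equivalence $\beta^{-1}(\Ank)^{-1}\le\Bnk\le\beta(\Ank)^{-1}$, which survives conjugation by $\pihk$.

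The principal obstacle is the third summand, which for $k\ge2$ contains the recursively defined $\B{k-1}$; a direct induction would let the conditioning of $\B{k-1}$ leak into $c_2$ and grow with $k$. The remedy is the telescoping identity~\eqref{eq:7}: since $d_h^{(k-1)}d_h^{(k-2)}=0$, the potential term of $\B{k-1}$ is annihilated inside $d_h\B{k-1}d_h^t$, so this summand equals $\tau^{-1}d_h\widetilde B^{(k-1)}d_h^t$ with the \emph{non-recursive} operator $\widetilde B^{(k-1)} = (\Dh{k-1})^{-1} + \pih{k-1}\Bn{k-1}(\pih{k-1})^t$. Being sandwiched by $d_h$ and $d_h^t$, it only requires comparing $\widetilde B^{(k-1)}$ with $(\A{k-1})^{-1}$ as forms on $q = d_h^t w \in (\ker d_h^{(k-1)})^\perp$. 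On that subspace the discrete Poincar\'e inequality (available because the starlike hypothesis makes the discrete complex exact) gives $\|q\|\prec\|d^{(k-1)}q\|$, whence $(\A{k-1}q,q)\approx\|d^{(k-1)}q\|^2$ and the discarded potential term is immaterial; the smoother and vector-Laplacian pieces of $\widetilde B^{(k-1)}$, with $\Bn{k-1}\approx_\beta(\An{k-1})^{-1}$, then deliver the required $\beta$-equivalence. (For the base case $k=1$ the summand is simply $\tau^{-1}d_h\B 0 d_h^t$ and the comparison is immediate from~\eqref{eq:beta}.) Establishing this non-recursive subequivalence cleanly --- the discrete Poincar\'e step on $(\ker d_h^{(k-1)})^\perp$ in particular --- is the crux of the argument.
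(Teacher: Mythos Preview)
Your analysis of the ideal preconditioner $P_\star$ is exactly the paper's argument: same choice of $V,\Vt_1,\Vt_2,\vL,\vLt_1,\vLt_2,S,\Rt_1,\Rt_2$, same verification of~\eqref{eq:Rupper} and~\eqref{eq:Rlower} via Lemmas~\ref{lem:inverse}, \ref{lem:Pi}, and~\ref{lem:lowerbdprm}. You also correctly identify the telescoping identity~\eqref{eq:7} as the device that kills the recursion in the third summand of~$\B k$.

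Where you diverge is in the final comparison. After~\eqref{eq:7} and the replacement $\Bn{k-1}\le\beta(\An{k-1})^{-1}$, the paper does \emph{not} invoke a discrete Poincar\'e inequality. It simply observes that the two-term operator $(\Dh{k-1})^{-1}+\pih{k-1}(\An{k-1})^{-1}(\pih{k-1})^t$, when evaluated as a quadratic form at $q=d_h^t v$, \emph{equals} the full three-term $\Pkm$ at~$q$: the third term $\tau^{-1}\dhh{k-2}(\A{k-2})^{-1}(\dhh{k-2})^t$ of $\Pkm$ also vanishes on $q$, because $(\dhh{k-2})^t(\dhh{k-1})^t=(\dhh{k-1}\dhh{k-2})^t=0$. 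This is the same $d\circ d=0$ trick as~\eqref{eq:7}, now on the adjoint side. One then directly applies the already-established~\eqref{eq:5} at level $k-1$ to obtain $(\B k v,v)\le\beta\alpha_{k-1}(\Pk v,v)\le\beta\alpha_{k-1}\alpha_k((\A k)^{-1}v,v)$, and similarly for the lower bound.

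Your Poincar\'e route, as written, has a gap. Knowing $\|q\|\prec\|d q\|$ on $(\ker d_h^{(k-1)})^\perp$ tells you $(\A{k-1}q,q)\approx\|d q\|^2$, but it does \emph{not} by itself give $\widetilde B^{(k-1)}\approx(\A{k-1})^{-1}$ on that subspace; you would still need a separate stable-decomposition (Nepomnyaschikh-type) argument for the two-term preconditioner there, which you do not supply. Moreover, even if completed, the resulting constant would involve the Poincar\'e constant and other geometric constants, so the clean claim $B\approx_\beta P_\star$ is not what your argument yields. The paper's observation sidesteps all of this: no new analysis is needed beyond~\eqref{eq:5} at one lower form degree.
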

\begin{proof}
  First, we analyze the preconditioner 
  \begin{equation}
    \label{eq:P}
  \Pk  = D_h^{-1} + 
  \vPi_h (\Ank)^{-1} \vPi_h^t + \tau^{-1}  d_h (\A {k-1})^{-1} d_h^t.
  \end{equation}
  For this, we apply Lemma~\ref{lem:N} with
  \begin{align*}
    V & = \Vhk, && \vL = \A k, && S = D_h,
    \\
    \Vt_1&  = \Vnk, && \vLt_1=\Ank,  && \Rt_1 = \pihk,
    \\
    \Vt_2 & = \Vh {k-1} && \vLt_2 = \tau \A {k-1} && \Rt_2 = \dhh{k-1}.
  \end{align*}
  \rev{Note that 
    $V$ and $\Vt_i$ are endowed with $L^2$ inner products as before,
    so e.g., $\| w \|_{\vLt_1}^2 = (\smash{\vLt_1} w, w) = (\smash{\Ank} w,w) = \tau
    \| w \|^2 + (d^0 w, d^0w)$.}
  We must verify the
  conditions~\eqref{eq:Rupper} and~\eqref{eq:Rlower} of the lemma.

  To verify~\eqref{eq:Rupper}, we use the following bounds, which hold
  for any  $z_h \in \Vt_1$, $p_h \in \Vt_2$, and $v_h \in V$:
  \begin{align}
    \nonumber
    \| \Rt_1 z_h \|_\vL^2
    & = \| \pihh  z_h \|_A^2
      = \tau \| \pihh  z_h\|^2 + \| d \pihh z_h \|^2 
    \\ \label{eq:12}
    & \prec \tau \| z_h \|^2 + | z_h |_{H^1(\om, \F k)}^2 
      = \| z_h \|_{\vLt_1}^2,
    \\ \nonumber
    \| \Rt_2 p_h \|_\vL^2 
    & = \| d p_h \|_A^2 = \tau \| d p_h \|^2 
      \le \tau \left( \| d p_h \|^2 + \tau \| p_h \|^2 \right)
      \\ \nonumber
    & \;\rev{\ensuremath{=}}\; \tau \| p_h \|_{A}^2 = \|p_h\|_{\vLt_2}^2,
    \\ \nonumber
    \| v_h \|_{\vL}^2 & = \tau \| v_h \|^2 + \| d v_h \|^2 
    \\ \label{eq:13}
    &  \prec ( h^{-2} + \tau) \| v_h \|^2 = \| v_h \|_{D_h}^2.
  \end{align}
  We have used the inverse inequality of Lemma~\ref{lem:inverse} in
  the last bound. With the above bounds, we have   verified~\eqref{eq:Rupper}.

  Next, to verify~\eqref{eq:Rlower}, we use Lemma~\ref{lem:lowerbdprm}
  to decompose any $u_h$ in $V$ into
  $u_h = s_h + \Rt_1 z_h + \Rt_2 p_h = s_h + \pihh z_h + d p_h$ and
  apply~\eqref{eq:3}.  Since $\tau \le 1$, \eqref{eq:3} implies
  \begin{align*}
    \| s_h \|_{D_h}^2 + \| p_h \|_{\vLt_2}^2 + \| z_h \|_{\vLt_1}^2 
    &  = 
    \| s_h \|_{D_h}^2 + \tau \| p_h \|_{A}^2 + \| z_h \|_{\vLt_1}^2 
    \\
    & \le
    \| s_h \|_{D_h}^2 +
    \tau ( \tau \|p_h \|^2 + \| d p_h\|^2)
    + \| z_h \|_{\vLt_1}^2 
    \\
    & \prec \tau \| u_h \|^2 + (1 + \tau) \| d u_h \|^2
    \\
    & \prec \| u_h \|_A^2.
  \end{align*}
  This verifies~\eqref{eq:Rlower}. 
  Thus Lemma~\ref{lem:N} yields the existence of an $\alphak \ge 1$
  (after overestimating the constants if necessary) such that 
  \begin{equation}
    \label{eq:5}
    \frac{1}{\alphak} ( \Pk v,v) \le ( (\A k)^{-1} v,v) \le 
    \alphak (\Pk v,v)    
  \end{equation}
  for all $v \in V$.

  To complete the proof, we use the quadratic form of $\Pk$ to
  estimate that of $B$.  For any $v \in V$,
  \begin{align*}
    (\B k v,v) 
    & = (D_h^{-1} v,v) + (\Bnk \rev{\pihh^t} v, \rev{\pihh^t} v) 
      + \tau^{-1} (\B {k-1}      d_h^t v, \rev{d_h^t} v)
    \\
    & = ( (\Dh k)^{-1} v,v) + (\Bnk \rev{\pihh^t} v, \rev{\pihh^t} v) 
      \\
    & + \tau^{-1} 
      \left[ 
      ( (\Dh {k-1})^{-1} d_h^t v, d_h^t v) 
      + 
      ( ( \Bn {k-1})^{-1} \rev{\pihh^t} d_h^t, \rev{\pihh^t} d_h^t v)
      \right]      
  \end{align*}
  where we have used~\eqref{eq:7}. Now, using~\eqref{eq:beta} and
  \eqref{eq:P}, \rev{and~\eqref{eq:5}},
  \begin{align*}
    (\B k v,v) 
    & \le 
      (D_h^{-1} v,v) +
      \beta  ((\Ank)^{-1} \rev{\pihh^t} v, \rev{\pihh^t} v) 
    \\
    & 
      + \tau^{-1}\left[ 
      ( (\Dh {k-1})^{-1} d_h^t v, d_h^t v) 
      + 
      \beta ( ( \An {k-1})^{-1} \rev{\pihh^t d_h^t v}, \rev{\pihh^t} d_h^t v)
      \right]
      \\
    & \le \beta \left[
      (D_h^{-1} v,v) +
        ((\Ank)^{-1} \rev{\pihh^t} v, \rev{\pihh^t} v) 
      + \tau^{-1} ( \Pkm d_h^t v, d_h^t v)\right]
      \\
    & \le \beta \left[
      (D_h^{-1} v,v) +
        ((\Ank)^{-1} \rev{\pihh^t} v, \rev{\pihh^t} v) 
      + \tau^{-1} \alphakm ( (\A {k-1})^{-1} d_h^t v, d_h^t v)\right]
    \\
    & \le \beta \alphakm (\Pk v,v)
      \le \beta \alphakm \alphak 
      ( (\A k)^{-1} v,v).
  \end{align*}
  Combining with a similarly provable lower inequality, we have 
  \[
  \beta^{-1}\alphakm^{-1} \alphak^{-1}
  (\B k v,v) \le ( (\A k)^{-1} v,v) 
  \le \beta\alphakm \alphak
  (\B k v,v)
  \]
  for all $v \in \Vhk$.  
\end{proof}

\subsection{A variant}

The preconditioner in~\eqref{eq:B} is a generalization of auxiliary
space preconditioner in the form given in~\cite{HiptmXu07}. An
auxiliary space preconditioner in a slightly different form was
proposed in~\cite{KolevVassi09,KolevVassi12}. It can also be extended
to higher dimensions as we now show. To define this variant, let $Q_0$
denote the projection satisfying
\[
Q_0 u \in d \Vh {k-1}: \qquad (Q_0 u, \kappa) = (u, \kappa)\quad
\text { for all } \kappa \in d \Vh{k-1}.
\]
Then $A_0 = Q_0 A|_{d V_h} : d \Vh {k-1} \to d \Vh {k-1}$ satisfies
\begin{equation}
  \label{eq:9}
  (A_0 \kappa_1, \kappa_2) = (A \kappa_1,\kappa_2) = \tau (\kappa_1, \kappa_2)
\end{equation}
for all $\kappa_1,\kappa_2 \in d \Vh{k-1}$ because $d\circ d=0$. Clearly
$A_0$ is invertible for all $\tau>0$. Let $B_0: d \Vh{k-1} \to d \Vh{k-1}$ be a
preconditioner for $A_0$, i.e., there is a $\beta_0\ge 1$ such that
\begin{equation}
  \label{eq:B0}
  \beta_0^{-1} (B_0 \kappa, \kappa) \le (A_0^{-1} \kappa, \kappa) 
  \le \beta_0 ( B_0 \kappa, \kappa)
\end{equation}
for all $\kappa$ in $ d\Vh{k-1}$.  Using $B_0$, we define our second
auxiliary space preconditioner $C : \Vh k \to \Vh k$ by
\begin{equation}
  \label{eq:B2}
  C = D_h^{-1} + \vPi_h \Bnk \vPi_h^t + B_0 Q_0.  
\end{equation}

Unlike~\eqref{eq:B}, some care is needed to design and
implement~$B_0$.  Consider, for instance, the case $B_0 = A_0^{-1}$.
Although it appears from~\eqref{eq:9} that $A_0^{-1}$ is simply the
inverse of a mass matrix scaled by $\tau^{-1}$, the difficulty is that
we usually do not have a basis for $d \Vh{k-1}$ in a typical
implementation. To compute the action of the last term
in~\eqref{eq:B2} on some $v \in \Vhk$, namely
$\kappa_1 = A_0^{-1} Q_0 v,$ we write $A_0\kappa_1 = Q_0 v$ and
apply~\eqref{eq:9} to observe that $\kappa_1$ solves
\begin{align}
\label{eq:10}
\tau  (\kappa_1, \kappa_2)
  & =  (v, \kappa_2) \qquad \text{ for all } \kappa_2 \in d \Vh{k-1}.
\end{align}
Since we do not have a basis for $d\Vh{k-1}$, we use potentials $p_1, p_2$
in $\Vh{k-1}$ (for which we do have a basis) to express
$\kappa_i = d p_i$. Then~\eqref{eq:10} implies that $p_1$ in
$\Vh{k-1}$ solves
\begin{equation}
  \label{eq:11}
\tau  (dp_1, d p_2)
   =  (v, d p_2) \qquad \text{ for all } p_2 \in \Vh{k-1}.  
\end{equation}
Even if these equations do not uniquely determine $p_1$, this approach
does lead to a practical algorithm because we only need $d p_1$ to
apply~\eqref{eq:B2}. Note that $p_1$ is determined only up to the
kernel of $d$, but $\kappa_1 = d p_1$ is uniquely determined. One
strategy to compute $d p_1$ is to apply $d$ after computing a solution
$p_1$ given by the pseudoinverse of the system
in~\eqref{eq:11}. Another is to use an iterative technique that
converges to one solution of~\eqref{eq:11}. One may also use a
combination of such strategies, such as a multilevel iteration with
smoothers that are convergent despite the singularity
in~\eqref{eq:11}, combined with a coarse-level solver obtained from a
pseudoinverse. For more details, the reader may consult
\cite{KolevVassi09,KolevVassi12} or the implementations
in~\cite{hypre, mfem}. Notwithstanding the complications in
implementation, the analysis is a straightforward application of the
previous results.

\begin{theorem} \label{thm:variant} %
  Let $0 < \tau \le 1$ and let $A$ and $C$ be defined by \eqref{eq:A}
  and~\eqref{eq:B2}, respectively.  Suppose~\eqref{eq:beta}
  and~\eqref{eq:B0} holds and let $\beta_1 = \max(\beta,\beta_0).$ Then
  for each $1 \le k \le n-1$, there is an $\alpha\ge 1$ independent of
  $h$ and $\tau$ such that spectral condition number of $CA$ satisfies
  \[
  \kappa( CA) \le \alpha^2 \beta_1^2.
  \]
\end{theorem}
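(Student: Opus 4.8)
The plan is to mirror the proof of Theorem~\ref{thm:main} almost verbatim, the only structural change being in the third auxiliary space. First I would introduce the idealized variant preconditioner
\[
\tilde{C} = D_h^{-1} + \vPi_h (\Ank)^{-1} \vPi_h^t + A_0^{-1} Q_0
\]
and recognize it as an instance of the Nepomnyaschikh construction of Lemma~\ref{lem:N}, with $V = \Vhk$, $\vL = \A k$, $S = D_h$, the first auxiliary triple $(\Vt_1, \vLt_1, \Rt_1) = (\Vnk, \Ank, \pihk)$ chosen exactly as in Theorem~\ref{thm:main}, but with the second auxiliary space taken to be the closed range $\Vt_2 = d\Vh{k-1} \subset \Vhk$, the operator $\vLt_2 = A_0$, and $\Rt_2$ the natural inclusion $d\Vh{k-1} \hookrightarrow \Vhk$. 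The key bookkeeping observation is that the $L^2$-adjoint of this inclusion is precisely the projection $Q_0$, so that $\Rt_2 \vLt_2^{-1} \Rt_2^t = A_0^{-1} Q_0$ reproduces the third term of $\tilde{C}$.

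Next I would verify the hypotheses~\eqref{eq:Rupper} and~\eqref{eq:Rlower}. The bounds involving $\Rt_1$ and $S$ are identical to those already established in the proof of Theorem~\ref{thm:main}. The bound for $\Rt_2$ is in fact cleaner here: for any $\kappa \in d\Vh{k-1}$ we have $d\kappa = 0$ since $d\circ d = 0$, so $\|\Rt_2 \kappa\|_{\vL}^2 = \tau\|\kappa\|^2 = (A_0\kappa,\kappa) = \|\kappa\|_{\vLt_2}^2$ by~\eqref{eq:9}, giving the upper constant $c_2 = 1$. For~\eqref{eq:Rlower} I would feed the stable decomposition of Lemma~\ref{lem:lowerbdprm} directly into the lemma: writing $u_h = s_h + \pihh z_h + dp_h$ and setting $\kappa_h = dp_h \in d\Vh{k-1}$, the three components have exactly the required form $s_h + \Rt_1 z_h + \Rt_2 \kappa_h = u_h$, and since $\|\kappa_h\|_{A_0}^2 = \tau\|dp_h\|^2$ by~\eqref{eq:9}, every term on the left of~\eqref{eq:Rlower} is already controlled by~\eqref{eq:3}. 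Using $\tau \le 1$, the right side of~\eqref{eq:3} is bounded by $\|u_h\|_A^2$, so~\eqref{eq:Rlower} holds with $c_0^2 \prec 1$. Lemma~\ref{lem:N} then yields an $\alpha \ge 1$, independent of $h$ and $\tau$, with $\alpha^{-1}(\tilde{C} v, v) \le ((\A k)^{-1} v, v) \le \alpha(\tilde{C} v, v)$ for all $v$, as in~\eqref{eq:5}.

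Finally I would pass from the idealized $\tilde{C}$ to the computable $C$ by a term-by-term comparison, exactly as at the end of the proof of Theorem~\ref{thm:main}. The first two terms of $C$ are handled by the $n_k$-fold product version of~\eqref{eq:beta}, which gives $\beta^{-1}(\Bnk w, w) \le ((\Ank)^{-1} w, w) \le \beta(\Bnk w, w)$ with $w = \pihh^t v$. For the last term, I would use that $Q_0$ is a self-adjoint projection with $B_0 Q_0 v \in d\Vh{k-1}$ to write $(B_0 Q_0 v, v) = (B_0 Q_0 v, Q_0 v)$, and then apply~\eqref{eq:B0} with $\kappa = Q_0 v$ to sandwich $(B_0 Q_0 v, Q_0 v)$ between $\beta_0^{-1}(A_0^{-1} Q_0 v, v)$ and $\beta_0(A_0^{-1} Q_0 v, v)$. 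Since $\beta_1 = \max(\beta,\beta_0)$, these three estimates combine to give $\beta_1^{-1}(\tilde{C} v, v) \le (C v, v) \le \beta_1(\tilde{C} v, v)$; chaining this with the idealized equivalence produces $\beta_1^{-1}\alpha^{-1}((\A k)^{-1} v, v) \le (Cv,v) \le \beta_1\alpha((\A k)^{-1}v,v)$, whence $\kappa(CA) \le \alpha^2\beta_1^2$.

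The main obstacle I anticipate is not analytical but structural: correctly identifying the right auxiliary space for the third slot. One must use the range $d\Vh{k-1}$ (rather than the potentials $\Vh{k-1}$ used in Theorem~\ref{thm:main}) together with the inclusion map, and recognize that its adjoint is the $L^2$-projection $Q_0$. Once this identification is in place, the exactness $d\circ d = 0$ makes the $\Rt_2$ estimate exact with $c_2 = 1$, the stable decomposition of Lemma~\ref{lem:lowerbdprm} supplies the lower bound with no extra work, and the remainder is a direct transcription of the earlier argument.
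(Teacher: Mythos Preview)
Your proposal is correct and follows the paper's own proof essentially verbatim: the paper also introduces the idealized preconditioner $P = D_h^{-1} + \vPi_h (\Ank)^{-1} \vPi_h^t + A_0^{-1} Q_0$, applies Lemma~\ref{lem:N} with the second auxiliary triple $(\Vt_2,\vLt_2,\Rt_2)=(d\Vh{k-1},A_0,I)$, verifies~\eqref{eq:Rupper}--\eqref{eq:Rlower} exactly as you describe, and then passes from $P$ to $C$ term-by-term using~\eqref{eq:beta} and~\eqref{eq:B0}. Your explicit remark that the $L^2$-adjoint of the inclusion is $Q_0$ is the one point the paper leaves implicit.
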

\begin{proof}
  As in the proof of Theorem~\ref{thm:main}, we consider an
  intermediate
  $P = D_h^{-1} + \vPi_h (\Ank)^{-1} \vPi_h^t + A_0^{-1} Q_0$, which
  is the preconditioner of Lemma~\ref{lem:N} with
  \begin{align*}
    V & = \Vhk, && \vL = \A k, && S = D_h,
    \\
    \Vt_1&  = \Vnk, && \vLt_1=\Ank,  && \Rt_1 = \pihk,
    \\
    \Vt_2 & = d\Vh {k-1} && \vLt_2 = A_0 = \tau I, && \Rt_2 = I.
  \end{align*}
  We proceed to verify the conditions~\eqref{eq:Rupper}
  and~\eqref{eq:Rlower}.  The following estimates holds for all
  $z_h \in \Vt_1$, $\kappa_h \in \Vt_2$, and $v_h \in V$:
  \begin{align*}
    \| \Rt_1 z_h \|_\vL^2
    & \prec
      \| z_h \|_{\vLt_1}^2, && \text{ by~\eqref{eq:12},}
    \\
    \| \Rt_2 \kappa_h \|_\vL^2 
    & = \| \kappa_h \|_A^2 = \tau \| \kappa_h \|^2 
      = \| \kappa_h \|_{\vLt_2}^2,
    \\
    \| v_h \|_{\vL}^2 &  \prec  \| v_h \|_{D_h}^2, && \text{ by~\eqref{eq:13}}.
  \end{align*}
  Hence we have verified~\eqref{eq:Rupper}.  To
  verify~\eqref{eq:Rlower}, as before, we use
  Lemma~\ref{lem:lowerbdprm} to decompose any $u_h$ in $V$ into
  $u_h = s_h + \Rt_1 z_h + \Rt_2 \kappa_h = s_h + \pihh z_h +
  \kappa_h$, where $\kappa_h = d p_h \in \Vt_2$, and
  apply~\eqref{eq:3} to get 
  \begin{align*}
    \| s_h \|_{D_h}^2 +
    \tau \| \kappa_h \|^2
    + \| z_h \|_{\vLt_1}^2 \prec \| u_h \|_{A}^2.
  \end{align*}
  where we have used the assumption that $\tau$ is bounded.  This
  verifies~\eqref{eq:Rlower}.  Thus Lemma~\ref{lem:N} yields the
  existence of an $\alphak \ge 1$ (after overestimating the constants
  if necessary) such that
  \[
    \alphak^{-1} ( P v,v) \le ( A^{-1} v,v) \le 
    \alphak (P v,v)    
  \]
  for all $v \in V.$

  To complete the proof, observe that for any $v \in V$, 
  \begin{align*}
    (C v,v) 
    & = (D_h^{-1} v,v) + (\Bnk \rev{\pihh^t} v, \rev{\pihh^t} v) 
      + (B_0 Q_0 v, Q_0 v)
    \\
    & \le (D_h^{-1} v,v)
      + \beta ( (\Ank)^{-1} \rev{\pihh^t} v, \rev{\pihh^t} v) 
      + 
      \beta_0 (A_0^{-1} Q_0 v, Q_0 v)
      \\
    & \le \beta_1 (P v,v) \le \beta_1 \alpha_k (A^{-1} v,v).
  \end{align*}
  Together with a similarly provable other-side bound, we have
  $ \beta_1^{-1} \alphak^{-1} (C v,v)$ $ \le $ $( A^{-1} v,v)$ $ \le $
  $\beta_1\alphak (C v,v)$ for all $v \in V$.
\end{proof}

\section{Implementation in 4 dimensions} \label{sec:impl}

In this section, we detail the implementation of the building blocks
of the preconditioner in four dimensions. These details form the basis
for our publicly available implementation of the preconditioner in the
MFEM package~\cite{mfem}. The vector space $\F k$ in general
dimensions is not usually implemented in finite element packages
(yet). Therefore, our approach is to view forms using elements (called
``proxies'' below) of the more standard vector spaces like $\RRR$,
$\RRR^4$, and the vector space of $4 \times 4$ skew symmetric
matrices~$\KKK$.

\subsection{Proxies of forms} \label{ssec:prox}

As already mentioned in \S\ref{sec:sobolev}, any 
$\varphi \in \F k$ has the basis expansion
\begin{equation} \label{eq:kform}
\varphi\; = \sum_{1 \le i_1 < \cdots < i_k \le 4} 
\varphi_{i_1 \cdots i_k} \; dx^{i_1} \wedge \cdots \wedge dx^{i_k}.
\end{equation}
Here the sum runs over all indices in $\II_k$ with four components.  The numbers $\varphi_{i_1 \cdots i_k}$, called the ``components'' or the ``coefficients'' of the form, are arranged into vectors or matrices that form ``proxies'' of $k$-forms, as defined below.

The proxy of a $k$-form $\varphi$ is denoted by $\p{k}{\varphi}$ and
is defined as follows.  In the case of a $0$-form $\varphi$, we set
$\p {0}{ \varphi} = \varphi$.  In the case of higher form degrees, we
use the components of $\varphi$ in~\eqref{eq:kform}, namely
$\varphi_i$ for 1-form, $\varphi_{ij}$ for 2-form, and $\varphi_{ijk}$
for 3-form, to define proxies:
\begin{align*}
  \p{1}{ \varphi } & = 
                      \begin{bmatrix}
                        \varphi_1 \\ \varphi_2 \\ \varphi_3 \\ \varphi_4 
                      \end{bmatrix},
                   & 
                     \p{2}{\varphi} & =
  \begin{bmatrix*}[r]
    0 & \varphi_{34} & -\varphi_{24} & \varphi_{23} 
    \\
    -\varphi_{34} & 0 & \varphi_{14} &  - \varphi_{13}
    \\
    \varphi_{24} & -\varphi_{14} & 0 & \varphi_{12} 
    \\
    -\varphi_{23} & \varphi_{13} & -\varphi_{12} & 0 
  \end{bmatrix*},
& 
\p{3}{\varphi} = 
\begin{bmatrix*}[r]
  \varphi_{234} 
  \\
  -\varphi_{134} 
  \\
  \varphi_{124}
  \\
  -\varphi_{123}
\end{bmatrix*}.
\end{align*}
Finally a $\varphi \in \F 4$ has only one component $\varphi_{1234}$,
so we set $\p{4}{\varphi} = \varphi_{1234}$.  Thus $\p {k} {\cdot}$
introduces a one-to-one onto correspondence from $\F k$ to
$\RRR, \RRR^4, \KKK, \RRR^4, $ and $\RRR$, for $k=0,1,2,3,$ and $4$,
respectively.

Some identities are expressed better using the permutation (or the
Levi-Civita) symbol $\veps_{i_1\,i_2\ldots\, i_n}$, whose definition
we recall briefly. For any $n$ and any indices $i_k$ in
$\{1,2,\ldots, n\}$, the value of $\veps_{i_1\,i_2\ldots\, i_n}$ is
zero when any two indices are equal. When the indices are distinct,
$i_1\,i_2\ldots i_n$ is a permutation of $1,2,\ldots, n$ and the value
of $\veps_{i_1\,i_2\ldots i_n}$ is set to the sign of the permutation.
It can be easily verified that the $(i,j)$th entry of the above-defined
skew symmetric matrix proxy of a $\varphi \in \F 2$ 
and 
the $i$th component of the proxy vector of a
$\varphi \in \F 3$ are given by
\begin{equation}
  \label{eq:prx23}
  \p{2} {\varphi}_{ij} = 
  \displaystyle\sum_{ 1 \le k < l \le 4} \veps_{ijkl} \varphi_{kl}, 
\qquad
  \p{3}{\varphi}_i = 
  \ds\sum_{ \rev{1 \le j < k < l \le 4}} \veps_{ijkl} \varphi_{jkl},
\end{equation}
where the sums run over increasing multi-indices in $\II_2$ and
$\II_3$, respectively.

Next, we define {\em two cross products} (both denoted by $\times$) in
the four-dimensional case. Recall that for any $u, v \in \RRR^3$, the
$i$th component of the standard cross product $u \times v$ is given by
$[u\times v]_i =\sum_{j,k=1}^3 \veps_{ijk} u_j v_k$. Analogously, we
define
\[
[u \times v]_{ij} = 
\sum_{k,l=1}^4 \veps_{ijkl} u_k v_l, \qquad 
u, v \in \RRR^4,
\]
i.e., this cross product of two 4-dimensional vectors yields a
skew-symmetric matrix.  We also define the cross product of two skew
symmetric matrices $\kappa, \eta$ in $\KKK$ by
\[
\kappa \times \eta = 
\sum_{1\le i<j\le 4}\; \sum_{1\le k<l\le 4} \veps_{ijkl} \kappa_{ij} \eta_{kl}, \qquad 
\kappa,\eta \in\KKK,
\]
i.e., result of the cross product of two matrices in $\KKK$ produces a
real number by a formula analogous to the cross product of
two-dimensional vectors (and the analogy is clear once we view the
skew-symmetric matrices as vectors in $\RRR^6$).

These operations, together with other standard multiplication
operations yield (after some elementary, albeit tedious calculations)
formulas for the wedge product and form action in terms of proxies, as
summarized in the next result. The standard products used below
include the scalar multiplication, the inner product of two vectors in
$\RRR^4$ (denoted by $\cdot$), the matrix-vector product of elements
in $\KKK$ with $\RRR^4$, and the Frobenius inner product between
matrices (denoted by $:$).

\begin{proposition} \label{prop:proxies_algebra}
  The following identities hold for the wedge product: 
  \begin{subequations}
    \begin{align}
      \p {k} { \eta \wedge \varphi } = 
      \p {k} { \varphi \wedge \eta}
      & = \p{0}{\varphi} \,\p {k} {\eta}, 
      && \varphi \in \F 0, \; \eta \in \F k,\;  k=0,\ldots,4, 
      \\
      -\p {2} { \eta \wedge \varphi} = 
      \p {2} { \varphi \wedge \eta}
      & = \p{1} {\varphi} \times \p{1} {\eta},
      && \varphi \in \F 1, \; \eta \in \F 1,
      \\
      \p{3}{ \eta \wedge \varphi} = 
      \p{3} {\varphi \wedge \eta}
      & = \p{2}{\eta} \,\p{1}{\varphi},
      && \varphi \in \F 1, \; \eta \in \F 2,
      \\
      -\p{4}{ \eta \wedge \varphi} = 
      \p{4}{\varphi \wedge \eta}
      & = \p{1}{\varphi} \cdot \p{3}{\eta},
      && \varphi \in \F 1, \; \eta \in \F 3,
      \\
      \p{4}{\eta \wedge \varphi} = 
      \p{4}{\varphi \wedge \eta}
      & = \p{2}{\varphi} \times \p{2}{\eta}
      && \varphi \in \F 2, \; \eta \in \F 2.
    \end{align}
  \end{subequations}
  The values of forms applied to vectors $u,v,w,z \in \RRR^4$ are
  given by
  \begin{subequations}
    \begin{align}
      \varphi(v) & = \p{1}{\varphi} \cdot v, 
      && \varphi \in \F 1,
      \\
      \varphi(u,v) & = \p{2} {\varphi} : (u \times v),
      && \varphi \in \F 2,
      \\
      \varphi(u,v,w) & = \det\left[ \p{3}{\varphi}, u, v, w\right],
      && \varphi \in \F 3,
      \\
      \varphi(u,v,w,z) & =  \p{4} {\varphi} \det[ u, v, w, z],
      && \varphi \in \F 4.
    \end{align}
  \end{subequations}
\end{proposition}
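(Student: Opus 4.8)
The plan is to exploit the multilinearity of every quantity appearing in the statement. Each wedge product $\eta \wedge \varphi$ is bilinear in the coefficients of $\eta$ and $\varphi$, each proxy map $\p{k}{\cdot}$ is linear, and each of the defined products (scalar multiplication, $\cdot$, matrix-vector product, $\times$, and $:$) is bilinear; likewise the form action $\varphi(v_1,\dots,v_k)$ is linear in $\varphi$ and multilinear in its vector arguments. Consequently it suffices to verify each identity on the elementary forms $dx^\alpha$ and on the coordinate vectors $e_1,\dots,e_4$, after which the general case follows by expanding in the basis of \eqref{eq:kform} and summing.

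For the wedge-product identities, I would first expand $\varphi$ and $\eta$ using \eqref{eq:kform} and compute $\varphi\wedge\eta$ directly from the defining rule, collecting the coefficient of each surviving elementary form and reordering its indices into increasing order, which introduces signs recorded by $\veps$. I would then read off the proxy of the resulting form using the definitions of $\p{k}{\cdot}$ together with \eqref{eq:prx23}, and compare with the right-hand side by unfolding the definitions of the two cross products and of the matrix-vector and scalar products. The Levi-Civita contraction $\sum_{i,j}\veps_{ijkl}\veps_{ijmn}=2(\delta_{km}\delta_{ln}-\delta_{kn}\delta_{lm})$ and its lower-order analogues are the algebraic engine that makes both sides agree. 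For instance, in the $\F 1 \wedge \F 1$ case one finds $(\varphi\wedge\eta)_{ij}=\varphi_i\eta_j-\varphi_j\eta_i$, whence $\p{2}{\varphi\wedge\eta}_{ij}=\sum_{k<l}\veps_{ijkl}(\varphi_k\eta_l-\varphi_l\eta_k)=\sum_{k,l}\veps_{ijkl}\varphi_k\eta_l=[\p{1}{\varphi}\times\p{1}{\eta}]_{ij}$, using antisymmetry of $\veps$ in its last two slots.

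For the form-action identities, I would start from the standard fact that an elementary $k$-form evaluates on vectors as a minor determinant, $(dx^{i_1}\wedge\cdots\wedge dx^{i_k})(v_1,\dots,v_k)=\det\big[(v_m^{i_l})_{l,m}\big]$, so that $\varphi(v_1,\dots,v_k)=\sum_{i_1<\cdots<i_k}\varphi_{i_1\cdots i_k}\det\big[(v_m^{i_l})\big]$. The case $k=1$ is then immediate. For $k=2,3$ I would express the minors through the proxies: the $\F 3$ identity follows from a Laplace (cofactor) expansion of the $4\times4$ determinant $\det[\p{3}{\varphi},u,v,w]$ along its first column, matched against \eqref{eq:prx23}; the $\F 2$ identity follows by writing the Frobenius product $\p{2}{\varphi}:(u\times v)$ as a double $\veps$-contraction and reducing it with the same identity as above. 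The $k=4$ case is a single determinant scaling.

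The main obstacle is purely bookkeeping: tracking signs and combinatorial factors consistently. In particular, one must reconcile the factor $2$ produced by $\sum_{i,j}\veps_{ijkl}\veps_{ijmn}$ and by the convention $A:B=\sum_{i,j}A_{ij}B_{ij}=2\sum_{i<j}A_{ij}B_{ij}$ on skew matrices with the single counting implicit in the increasing-multi-index sums of \eqref{eq:prx23}, and one must check that the sign conventions built into the proxy matrix $\p{2}{\cdot}$ and vector $\p{3}{\cdot}$ are exactly those forced by $\veps$, so that the antisymmetric cases (the minus signs in the $\F 1 \wedge \F 1$ and $\F 1 \wedge \F 3$ identities) emerge correctly. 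Once the normalizations are fixed consistently, each identity collapses to one of a small number of Levi-Civita contractions, and the remaining verification is routine.
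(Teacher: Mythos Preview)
Your proposal is correct and matches the paper's approach: the paper does not spell out a proof but simply remarks that the identities follow ``after some elementary, albeit tedious calculations,'' which is precisely the direct component-by-component verification via Levi-Civita contractions that you outline. Your discussion of the bookkeeping hazards (factors of $2$ from the Frobenius product on skew matrices versus increasing-index sums, and the sign conventions in \eqref{eq:prx23}) is on point and is exactly where the tedium lies.
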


\subsection{The four  derivatives}  

Analogous to the three fundamental first order differential operators
($\grad, \curl,$ $\div$) in three dimensions, there are four first
order differential operators in four dimensions, which we denote by
\begin{equation}
  \label{eq:8}
  \grad, \;\Curl,\; \Div,\;
  \div.  
\end{equation}
The first and the last operators in~\eqref{eq:8} are
standard: For any $u \in \DD'( \om, \RRR)$ define
$\grad u \in \DD'(\om,\RRR^4)$ as the vector whose $i$th component is
$\d_i u$.  For any $v \in \DD'(\om, \RRR^4),$ we set
$\div v = \sum_{i=1}^4 \d_i v_i.$

Next, we define the four-dimensional $\Curl$ in a way that brings out
the analogies with the three-dimensional case.  Recall that the $i$th
component of the curl of a vector function $w$ in three dimensions can
be expressed as $[\curl w]_i = \sum_{j,k=1}^3 \veps_{ijk} \d_j w_k.$
Analogously, for any $w \in \DD'( \om, \RRR^4)$, we define $\Curl w$
as the matrix in $\KKK$ whose $(i,j)$th entry is defined by
\begin{equation}
  \label{eq:Curl}
  [\Curl w]_{ij} = \sum_{k,l=1}^4\veps_{ijkl} \d_k w_l
\end{equation}
i.e., 
\[
\Curl w = 
\begin{bmatrix}
  0 & \d_3 w_4 - \d_4 w_3 & \d_4 w_2 - \d_2 w_4 & \d_2 w_3 - \d_3 w_2
  \\
  \d_4 w_3 - \d_3 v_4 & 0 & \d_1w_4 - \d_4 w_1 & \d_3 v_1 - \d_1 w_3 
  \\
  \d_1 v_4 - \d_4 v_2 & \d_4 v_1 - \d_1 v_4 & 0 & \d_1 v_2 - \d_2 v_1 
  \\
  \d_3 v_2 - \d_2 v_3 & \d_1 v_3 - \d_3 v_1 & \d_2 v_1 - \d_1 v_2 & 0
\end{bmatrix}.
\]
Finally, the remaining operation $\Div$ acts on
$\kappa \in \DD'(\om, \KKK)$ and produces $\Div \kappa \in \RRR^4$ by
taking divergence row-wise, i.e.,
\begin{equation}
  \label{eq:Div}
  [\Div \kappa]_i = \sum_{j=1}^4 \d_j \kappa_{ij}.
\end{equation}
Note that the identities
$ \Curl (\grad u) = 0,$ $ \Div ( \Curl w) = 0,$ and
$\div ( \Div \kappa) = 0$ follow immediately from the above
definitions.

By connecting the inputs and outputs of the above-introduced four
differential operators to proxies of forms, we may understand them as
manifestations of exterior derivatives. In fact, the following diagram
commutes:
\[
\xymatrixcolsep{2.9pc}\xymatrixrowsep{3.5pc}
\xymatrix{
  \DD'(\om, \F 0) \ar[r]^{\dd{0}} \ar[d]|{\quad\p{0}{\,\cdot\,}} & 
  \DD'(\om, \F 1) \ar[r]^{\dd{1}} \ar[d]|{\quad\p{1}{\,\cdot\,}} & 
  \DD'(\om, \F 2) \ar[r]^{\dd{2}} \ar[d]|{\quad\p{2}{\,\cdot\,}} & 
  \DD'(\om, \F 3) \ar[r]^{\dd{3}} \ar[d]|{\quad\p{3}{\,\cdot\,}} & 
  \DD'(\om, \F 4) \ar[d]|{\quad\p{4}{\,\cdot\,}} \\
  \DD'(\om, \RRR)   \ar[r]^{\grad} & 
  \DD'(\om, \RRR^4) \ar[r]^{\Curl} & 
  \DD'(\om, \KKK)   \ar[r]^{\Div} & 
  \DD'(\om, \RRR^4) \ar[r]^{\div} & 
  \DD'(\om, \RRR). 
}
\]
This follows from the identities collected next, which can again be
proved by elementary calculations.

\begin{proposition}\label{prop:proxies_derivatives}
  The following identities hold:
  \begin{subequations}
    \label{eq:cdprx}
    \begin{align}
      \label{eq:cdprx0}
      \p{1}{ \dd{0}\varphi} & = \grad( \p{0} \varphi), 
                            &&  \varphi \in \DD'(\om, \F 0),
      \\
      \label{eq:cdprx1}
      \p{2}{ \dd{1} \varphi} & = \Curl (\p{1} \varphi),
                            &&  \varphi \in \DD'(\om, \F 1),
      \\
      \label{eq:cdprx2}
      \p{3}{ \dd{2}\varphi} & = \Div (\p{2} \varphi),
                            &&  \varphi \in \DD'(\om, \F 2),
      \\
      \label{eq:cdprx3}
      \p{4}{ \dd{3} \varphi} & = \div (\p{3} \varphi),
                            && \varphi \in \DD'(\om, \F 3).
    \end{align}
  \end{subequations}
\end{proposition}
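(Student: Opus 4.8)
The plan is to verify the four identities \eqref{eq:cdprx0}--\eqref{eq:cdprx3} one at a time by direct computation. In each case I would expand $\varphi$ in the elementary-form basis, compute $d\varphi$ through the defining formula \eqref{eq:dw}, read off the components of the resulting $(k+1)$-form, and apply the proxy map to compare against the right-hand side. Because the four operators along the bottom row of the diagram are linear, first-order, constant-coefficient operators, it suffices to match the coefficient of each monomial $\partial_m \varphi_{i_1\cdots i_k}$ on both sides, so the whole proposition collapses to a sequence of elementary bookkeeping identities for the Levi-Civita symbol.

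For the two endpoint cases the computation is immediate. When $k=0$, formula \eqref{eq:dw} gives $d^{(0)}\varphi = \sum_{i=1}^4 \partial_i\varphi\, dx^i$, whose components $\partial_i\varphi$ are exactly the entries of $\grad(\p{0}{\varphi})$, which proves \eqref{eq:cdprx0}. When $k=3$, only one elementary $4$-form survives, and I would collect the four contributions to its coefficient, tracking the sign of each permutation that reorders $dx^m\wedge dx^i\wedge dx^j\wedge dx^k$ into $dx^1\wedge dx^2\wedge dx^3\wedge dx^4$. The resulting alternating sum $\partial_1\varphi_{234} - \partial_2\varphi_{134} + \partial_3\varphi_{124} - \partial_4\varphi_{123}$ is visibly equal to $\div(\p{3}{\varphi})$ once the signs built into the definition of $\p{3}{\cdot}$ are inserted, which proves \eqref{eq:cdprx3}.

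The two middle identities carry the real content, since they couple the antisymmetrization produced by $d$ with the Levi-Civita symbol appearing in the proxies \eqref{eq:prx23}. For $k=1$ I would write $d^{(1)}\varphi = \sum_{i<j}(\partial_i\varphi_j - \partial_j\varphi_i)\,dx^i\wedge dx^j$, so the $(i,j)$ entry of the proxy of $d^{(1)}\varphi$ is $\sum_{k<l}\veps_{ijkl}(\partial_k\varphi_l - \partial_l\varphi_k)$; the antisymmetry of $\veps$ in its last two slots collapses this to $\sum_{k,l=1}^4\veps_{ijkl}\partial_k\varphi_l$, which is precisely $[\Curl \p{1}{\varphi}]_{ij}$ by \eqref{eq:Curl}. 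The case $k=2$ is the crux. I would first record that the $(q,r,s)$ component of $d^{(2)}\varphi$ is the alternating sum $\partial_q\varphi_{rs} - \partial_r\varphi_{qs} + \partial_s\varphi_{qr}$, apply the vector proxy \eqref{eq:prx23}, and then match the outcome against $[\Div \p{2}{\varphi}]_i = \sum_{j,\,k<l}\veps_{ijkl}\partial_j\varphi_{kl}$. The comparison works because, for fixed $i$, every nonzero term forces $\{j,k,l\}$ to be the complement of $i$ in $\{1,2,3,4\}$, and the three ways of splitting that complement into a ``$j$'' and an increasing pair ``$(k,l)$'' reproduce exactly the three signed terms of the alternating sum.

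I expect the only genuine obstacle to be sign discipline: the step of reordering wedge products (to extract form components) and the step of contracting with $\veps$ (inside the proxy maps) each introduce signs, and the proposition holds precisely because these are compatible with the orientation conventions baked into $\p{2}{\cdot}$, $\p{3}{\cdot}$, $\Curl$, and $\Div$. I would therefore fix all orientation conventions once at the outset and carry out the $k=2$ verification using the complement-of-$i$ enumeration above, since that case simultaneously exercises every convention the other three cases rely on.
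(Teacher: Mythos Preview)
Your proposal is correct and matches the paper's approach: the paper offers no explicit proof of this proposition, merely indicating (as for the preceding Proposition~\ref{prop:proxies_algebra}) that such identities follow from ``elementary, albeit tedious calculations,'' which is exactly the direct component-by-component verification you outline. Your bookkeeping for the two middle cases, in particular the complement-of-$i$ enumeration for $k=2$, is accurate and is the natural way to carry out those calculations.
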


In addition to the operator~$\Curl$, another curl operator deserves
mention because it fits in an alternate sequence of spaces in four
dimensions.  Let $\MMM$ denote the space of $4 \times 4$ matrices and
let $\skw m = ( m - m^t)/2$ for any $m \in \MMM$. Define the curl of a
skew-symmetric matrix, namely
$\curl: \DD'(\om, \KKK) \to \DD'(\om, \RRR^4),$ and an
antisymmetrization operator $K: \MMM \to \KKK$, by
\begin{equation}
  \label{eq:Kdefn}
  [\curl \omega ]_i = \sum_{k,l=1}^4 \veps_{ijkl} \d_j
  \omega_{kl},
\qquad
  [K m ]_{ij} =  \sum_{k,l=1}^4 \veps_{ijkl}  m_{kl}
\end{equation}
for any $\omega \in \DD'(\om, \KKK)$ and $m \in \MMM$. Also let the
gradient of a vector field,
$\Grad : \DD'(\om, \RRR^4 ) \to \DD'(\om, \MMM),$ be defined by
$[\Grad u]_{ij} = \d_j u_i$.  Now, analogous to the previously
discussed sequence, 
\[
\xymatrixcolsep{2.9pc}
\xymatrix{
  \DD'(\om, \RRR)   \ar[r]^{\grad} & 
  \DD'(\om, \RRR^4) \ar[r]^{\Curl} & 
  \DD'(\om, \KKK)   \ar[r]^{\Div} & 
  \DD'(\om, \RRR^4) \ar[r]^{\div} & 
  \DD'(\om, \RRR),
}
\]
we may study the following sequence with the newly defined $\curl$:
\[
\xymatrixcolsep{2.9pc}
\xymatrix{
  \DD'(\om, \RRR)   \ar[r]^{\grad} & 
  \DD'(\om, \RRR^4) \ar[r]^{\skw\Grad} & 
  \DD'(\om, \KKK)   \ar[r]^{\curl} & 
  \DD'(\om, \RRR^4) \ar[r]^{\div} & 
  \DD'(\om, \RRR).
}
\]
The properties of the second sequence can be derived from that of the
first using 
\begin{equation}
  \label{eq:14}
 K \skw \Grad u =  \Curl u, 
 \qquad 
 \curl \omega = \Div  K \omega 
\end{equation}
for all $u \in \DD'(\om, \RRR^4)$ and $\omega \in \DD'(\om, \KKK)$. In
particular, $\skw\Grad \grad =0$, $\curl \skw\Grad =0$ and
$\div \curl =0$.

\subsection{Sobolev spaces}

In view of the identities of Proposition~\ref{prop:proxies_derivatives}, the
spaces $\Hd k$ in four dimensions are identified to be the same as
\begin{align*}
  H(\grad, \om, \RRR) 
  & = \{ u \in L^2(\om, \RRR):\; \grad u \in L^2(\om, \RRR^4) \},
  \\
  H(\Curl, \om, \RRR^4)
  & = \{ v \in L^2(\om, \RRR^4): \;\Curl v \in L^2(\om, \KKK) \},
  \\
  H(\Div, \om, \KKK)
  & = \{ \kappa \in L^2(\om, \KKK):\; \Div \kappa \in L^2(\om, \RRR^4)\},
  \\
  H(\div, \om, \RRR^4)
  & = \{ q \in L^2(\om, \RRR^4):\; \div q \in L^2(\om, \RRR) \},
\end{align*}
for $k=0,1,2,3$, respectively. Also setting 
\[
  H(\curl, \om, \KKK)
  = \{ \omega \in L^2(\om, \KKK):\; \curl \omega \in L^2(\om, \RRR^4)\},
\]
we note that the operator $K$ defined in~\eqref{eq:Kdefn} yields a
one-to-one onto homeomorphism
$ K : H(\curl, \om, \KKK) \to H(\Div, \om, \KKK)$.

In addition to the usual Green's formula involving gradient and
divergence, one can derive other integration by parts formulae, which
also clarify the nature of traces in the new spaces.  Let
$\DD(\bar \om, \KKK)$ and $\DD(\bar\om, \RRR^4)$ denote the sets of
restrictions to $\om$ of functions in $\DD(\RRR^4, \KKK)$ and
$\DD(\RRR^4, \RRR^4)$, respectively. Let $\om$ have Lipschitz boundary
so that the unit outward normal $n$ on $\d\om$ is well defined a.e.
Then we can show that the traces $(n \times u)|_{\d\om}$ and
$(\omega n)|_{\d\om}$ have meaning for $u \in H(\Curl, \om, \RRR^4)$
and $H(\Div, \om, \KKK)$. More precisely, define
\[
(\tr{1}u)(\og) = \int_{\d\om} (n \times u) : \og, \qquad
(\tr{2}\og)(u) = \int_{\d\om} \rev{\og n \cdot u}
\]
for all $u \in \DD(\om, \RRR^4)$ and $\og\in \DD(\om, \KKK)$.

\begin{proposition}
  \label{prop:intbyparts}
  Suppose $\om$ has Lipschitz boundary. Then $\tr{1}$ and $\tr{2}$
  extend to continuous linear operators
  $ \tr{1} : H(\Curl, \om, \RRR^4) \to H(\curl,\om, \KKK)', $ and
  $ \tr{2} : H(\Div, \om, \KKK) \to H(\Curl,\om, \RRR^4)' $ satisfying
  \begin{align*}
    ( \tr{1} u)( \omega)
    & = \int_\om \Curl u : \omega  -\int_\om \rev{u \cdot \curl \omega}
    \\
    (\tr{2} \kappa)(u)
    & = \int_\om \Div \kappa \cdot u \; dx -\int_\om \kappa \times \Curl u
  \end{align*}
  for all $u \in H(\Curl, \om, \RRR^4)$,  $\kappa \in H(\Div, \om,
  \KKK),$ and $\omega \in H(\curl, \om, \KKK)$.
\end{proposition}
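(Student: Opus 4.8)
The plan is to reduce everything to classical (strong-form) Green identities valid for functions that are smooth up to the boundary, and then to pass to the general case by a density plus continuity argument. Concretely, I would first establish the two pointwise-integrated identities
\[
\int_{\d\om}(n\times u):\omega = \int_\om \Curl u:\omega - \int_\om u\cdot\curl\omega,
\qquad
\int_{\d\om}(\kappa n)\cdot u = \int_\om\Div\kappa\cdot u - \int_\om \kappa\times\Curl u,
\]
for all $u\in\DD(\bar\om,\RRR^4)$ and $\omega,\kappa\in\DD(\bar\om,\KKK)$. By the very definitions of $\tr{1}$ and $\tr{2}$, the left-hand sides are $(\tr{1}u)(\omega)$ and $(\tr{2}\kappa)(u)$, so these are exactly the asserted formulas restricted to smooth arguments.

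Both smooth identities are instances of the divergence theorem. For the first, I would apply Gauss's theorem to the vector field $V$ with components $V^k = \sum_{i,j,l}\veps_{ijkl}\,u_l\,\omega_{ij}$; the product rule splits $\sum_k\d_k V^k$ into $\Curl u:\omega$ plus a term that, after relabeling the Levi-Civita indices (using $\veps_{cdba}=-\veps_{abcd}$), equals $-\,u\cdot\curl\omega$, while the boundary integrand $\sum_{i,j,k,l}\veps_{ijkl}u_l\omega_{ij}n_k$ is precisely $(n\times u):\omega$ by the definition of the vector cross product. For the second, I would apply the divergence theorem to $W^j=\sum_i\kappa_{ij}u_i$; here $\sum_j\d_j W^j = \Div\kappa\cdot u + \kappa:\skw\Grad u$, and the remaining algebraic step is the pointwise identity $\kappa\times\Curl u = \kappa:\skw\Grad u$ for skew $\kappa$. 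This last identity I would deduce from $\Curl u = K\skw\Grad u$ of \eqref{eq:14} together with the contraction formula $\sum_{k,l}\veps_{ijkl}\veps_{pqkl} = 2(\delta_{ip}\delta_{jq}-\delta_{iq}\delta_{jp})$ and the skew-symmetry of $\kappa$. These are the elementary but tedious computations; the only real care needed is bookkeeping of signs in the Levi-Civita contractions and matching them to the proxy sign conventions fixed in Proposition~\ref{prop:proxies_algebra}, which pins down the signs as displayed in the statement.

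Once the smooth identities hold, the boundedness and the extension are routine. From the first identity and Cauchy--Schwarz, $|(\tr{1}u)(\omega)|\le \|\Curl u\|\,\|\omega\| + \|u\|\,\|\curl\omega\| \le 2\,\|u\|_{H(\Curl,\om,\RRR^4)}\,\|\omega\|_{H(\curl,\om,\KKK)}$, so $u\mapsto\tr{1}u$ is a uniformly bounded map from $\DD(\bar\om,\RRR^4)$, equipped with the graph norm, into $H(\curl,\om,\KKK)'$; the analogous estimate from the second identity bounds $\tr{2}$ into $H(\Curl,\om,\RRR^4)'$. Invoking density (see the next paragraph), $\tr{1}$ and $\tr{2}$ then extend uniquely to bounded operators on the full spaces. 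To see that the formula survives the extension, fix $\omega\in H(\curl,\om,\KKK)$ and take smooth $u_m\to u$ in the graph norm: the left side converges by construction of the extension, while $\int_\om\Curl u_m:\omega\to\int_\om\Curl u:\omega$ and $\int_\om u_m\cdot\curl\omega\to\int_\om u\cdot\curl\omega$ because $\Curl u_m\to\Curl u$ and $u_m\to u$ in $L^2$. The same limiting argument handles $\tr{2}$.

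The main obstacle is precisely the \emph{density} of functions smooth up to the boundary in the graph-norm spaces $H(\Curl,\om,\RRR^4)$ and $H(\Div,\om,\KKK)$ on a general Lipschitz domain --- the 4D analogue of the classical density results for $H(\curl)$ and $H(\div)$ in three dimensions. I would obtain this by transporting the de Rham density theorem through the proxy identifications of \S\ref{ssec:prox}: via $\p{1}{\,\cdot\,}$ and $\p{2}{\,\cdot\,}$ the spaces $H(\Curl,\om,\RRR^4)$ and $H(\Div,\om,\KKK)$ are isomorphic, with equivalence constants independent of the functions, to $\Hd 1$ and $\Hd 2$, for which smooth-up-to-boundary forms are dense on Lipschitz domains. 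With this density and the smooth Green identities in hand, the rest of the argument is bookkeeping.
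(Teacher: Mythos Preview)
Your proposal is correct and follows essentially the same route as the paper: establish the two Green-type identities for smooth functions (the paper also does this by integrating by parts termwise and simplifying with the properties of $\veps$), then invoke density of $\DD(\bar\om,\cdot)$ in the graph spaces to extend. The only notable difference is in how density is justified: the paper appeals directly to the general density theory for graph spaces of first-order operators (citing Jensen), whereas you pull it back to the de Rham density result via the proxy isomorphisms---both are valid and equally short, though you should remember that $H(\curl,\om,\KKK)$ is not literally one of the $\Hd k$ and needs the homeomorphism $K$ to be covered. (Also, your intermediate claim $\kappa\times\Curl u = \kappa:\skw\Grad u$ has a sign flipped; the correct relation is $\kappa\times\Curl u = -\,\kappa:\skw\Grad u$, which is what actually delivers the minus sign in the statement---exactly the Levi-Civita bookkeeping you flagged.)
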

\begin{proof}
  For $u \in \DD(\om, \RRR^4)$ and $\og \in \DD(\om, \KKK)$,
  integrating by parts each term that makes up the products below and
  using the properties of $\veps$ to simplify the result, we derive
  \begin{equation}
    \label{eq:intbyparts1}
    \int_\om \Curl u : \og -\int_\om u \cdot \curl \og = \int_{\d\om} (n
    \times u) : \og.  
  \end{equation}
  Similarly, we also derive
\begin{equation}
  \label{eq:intbyparts2}
 \int_\om \Div \og \cdot u  -\int_\om \og \times \Curl u
 = \int_{\d\om} \og n \cdot u.
\end{equation}
Now viewing $H(\Curl, \om, \RRR^4)$, $H(\Div, \om, \KKK)$, and
$H(\curl, \om, \KKK)$ as graph spaces of $\Curl$, $\Div$, and $\curl$,
we apply the well-known extensions of classical density proofs to
graph spaces (see e.g., \cite{Jense04}) to conclude that
$\DD(\bar \om, \RRR^4)$ is dense in $H(\Curl, \om, \RRR^4)$ and that
$\DD(\bar \om, \KKK)$ is dense in $H(\curl, \om, \KKK)$ as well as
$H(\Div, \om, \KKK)$.  Hence the result follows
from~\eqref{eq:intbyparts1} and~\eqref{eq:intbyparts2}.
\end{proof}

The Sobolev spaces we have introduced above have their
analogues with essential boundary conditions:
\begin{align*}
H_0(\Curl, \om, \RRR^4)
  & = \{ v \in H(\Curl, \om, \RRR^4): \;\tr{1} v = 0 \}, 
\\
H_0(\Div, \om, \KKK) 
& = \{ \og \in H(\Div, \om, \KKK): \; \tr{2} \og = 0\}.
\end{align*}
The construction of the HX preconditioner for these spaces follows
along the same lines as before, now using standard preconditioners in
$H_0^1(\om)$. \rev{To highlight the changes required in the analysis,
  first, instead of the regularized {Poincar{\'{e}}} operator $R_k$
  (appearing in the proof of Theorem~\ref{thm:regular}) , we must now
  use the generalized {Bogovski{\u \i}} operator (see
  \cite{CostaMcInt09} or \cite[Theorem~1.5]{MitreMitreMonni08}) to get
  the appropriate regular decomposition with boundary conditions.  We
  then continue along the previous lines after replacing $\Qhk$ by the
  $L^2$ projection into $\Vhk \cap H_0^1(\om, \F k)$.  Note that the
  $H_0^1$-stability of this projection holds as remarked
  in~\cite[p.~153]{BrambPasciStein02}. Bounded cochain projectors
  preserving homogeneous boundary conditions are also
  known~\cite{ChrisWinth08}, so all the ingredients are available to
  generalize our analysis to the case of homogeneous essential
  boundary conditions.}

\subsection{Finite element spaces}

Let $T$ be a $4$-simplex with vertices $a_i$, $i=1,\ldots, 5$. Let
$\lambda_i$ denote its $i$th barycentric coordinate, i.e.,
$\lambda_i(x)$ is the unique affine function (of the Euclidean
coordinate~$x$ of points in $T$) that equals 1 at $a_i$ and equals $0$
at all the remaining vertices of~$T$. Let
\[
g_i = \grad \lambda_i \in \RRR^4, 
\qquad
g_{ij} = g_i \times g_j
 \in \KKK,
\qquad  
g_{ijk} = g_{ij} g_k \in \RRR^4.
\]
Let $f_{i_1,\ldots,i_k}$ denote the subsimplex of $T$ formed by the
convex hull of $a_{i_1}, \ldots, a_{i_k}$ for any $k=1,\ldots, 5$ and
let $\trg{k}{T}$ denote the set of all $k$-subsimplices of $T$.
To a $0$-subsimplex $f_i=a_i$ we associate the function $\lambda_i$ and to
other subsimplices $f_{ij}$, $f_{ijk}$ and $f_{ijkl}$, we associate, 
respectively, the following functions.
\begin{subequations}
\label{eq:whitney}
\begin{align}
  \lambda_{ij} 
  & = \lambda_i g_j - \lambda_j g_i,
  \\
  \lambda_{ijk}
  & = \lambda_i g_{jk} - \lambda_j g_{ik} + \lambda_k g_{ij},
  \\
  \lambda_{ijkl}
  & = \lambda_i g_{jkl} - \lambda_j g_{ikl} + \lambda_k g_{ijl} 
    - \lambda_l g_{ijk}.
\end{align}  
\end{subequations}
Note that these expressions depend on the ordering of the vertices and
on $T$. When such dependence is to be made explicit, we write the
function associated to any $f_{i_1,\ldots, i_k}\in \trg{k}{T}$, namely
$\lambda_{i_1,\ldots, i_k}$, as $\lambda_{a_{i_1},\ldots, a_{i_k}}^T$
or $\lambda_{a(f)}^T$ where $a(f)= (a_{i_1},\ldots, a_{i_k}).$

We implemented the lowest order polynomial space
$ P_1^{-,(k)}(T) = \spn \{ \lambda_{a(f)}: f \in \trg{k}{T}\}$ for
$k=0, 1,2$ and $3$. Using Propositions~\ref{prop:proxies_algebra}
and~\ref{prop:proxies_derivatives}, these spaces may be immediately
recognized as the space of proxies of the {\em Whitney basis}
\rev{\cite{ArnolFalkWinth06, 
ArnolFalkWinth09,Whitn57}} for $P_1^-\F k$.  To construct the
global finite element spaces, we consider the set of all
$k$-subsimplices of the simplicial mesh $\oh$, denoted by
$\trg{k}{\oh}$. An element $f$ of $\trg{k}{\oh}$ is in the set
$\trg{k}{T_j}$ for one or more mesh elements $T_1, \ldots, T_{n_f}$ in
$\oh$.  To each $f \in \trg{k}{\oh}$, we associate an {\em ordered}
set of its vertices $a(f)$. The ordering fixes a global orientation
of~$f$ independently of $T_j$.  Let $\lambda_{f}$, for each
$f \in \trg{k}{\oh}$, be the function that vanishes on all elements of
the mesh except $T_1, \ldots, T_{n_f}$ where its values are given by
$\lambda_f|_{T_j} = \lambda_{a(f)}^{T_j}.$ These functions define the
global finite space by 
\[
\Vp k = \spn \{\lambda_f: f \in \trg{k}{\oh}\}
\]
for each $k=0,1,2,3$. One can easily show that
$ \Vp 1 \subseteq H(\Curl, \om, \RRR^4), $ and
$ \Vp 2 \subseteq H(\Div, \om, \KKK),$ either directly integrating by
parts using Proposition~\ref{prop:intbyparts} on each mesh element, or
by observing that $\Vp{k}$ consists of all proxies of $\Vhk$
(when~$r=1$) and recalling \cite{ArnolFalkWinth06} that
$\Vhk \subseteq \Hd k$. Of course, we also have
$\Vp 0 \subseteq H(\grad, \om, \RRR^4)$ and
$\Vp 3 \subseteq H(\div, \om, \RRR^4)$. 

Our actual implementation uses an alternate, but equivalent technique
that proceeds by implementing the expressions in~\eqref{eq:whitney}
only on the unit $4$-simplex and then mapping the basis functions to
each mesh simplex appropriately (see the code in \cite{mfem} for more
details).

\subsection{Finite element interpolant}

The implementation of the HX preconditioner for
$\Vp 1 \subseteq H(\Curl,\om, \RR^4)$,
$\Vp 2 \subseteq H(\Div, \om, \KKK)$, and
$\Vp 3 \subseteq H(\div, \om, \RRR^4)$ requires us to implement
canonical finite element interpolants $\piCurl, \piDiv, $ and
$\pidiv$, into these spaces, respectively. Since the last one is
standard, we only describe the first two.

Let $T$ be a $4$-simplex with vertices $a_i$ and let $u: T \to \RRR^4$
be a smooth vector function. Let $e_{ij}$ denote the segment
connecting $a_i$ and $a_j$.  Then $\piCurl u|_T$ is the unique function
in $P_1^{-,(1)}(T)$ satisfying $\sigma_{ij} ( u - \piCurl u) =0$ for
every edge $e_{ij}$ of $T$, where 
\[
\sigma_{ij}(u) = \frac{1}{|e_{ij}|} \int_{e_{ij}} u \cdot (a_i - a_j) 
\]
and $|e_{ij}|$ denotes the length of the edge $e_{ij}$.

Next, let $\og: T \to \KKK$ be a smooth function and let $f_{ijk}$
denote the triangle formed by the convex hull of $a_i, a_j$ and $a_k$.
Then $\piDiv \og|_T$ is the unique function in $P_1^{-,(2)}(T)$
satisfying $\sigma_{ijk} ( \og - \piDiv \og) =0$ for all
$2$-subsimplices $f_{ijk}$ of $T$, where
\[
\sigma_{ijk}(\og) = \frac{1}{|f_{ijk}|}
\iint_{f_{ijk}} \og : (a_j - a_i) \times (a_k - a_i) 
\]
and $|f_{ijk}|$ denotes the area of the triangle $f_{ijk}$.

To compute the preconditioner action, we need to apply $\piCurl$ to
functions $u: \om \to \RRR^4$ whose components are in the lowest order
Lagrange finite element space. Then defining $\piCurl u|_T$ for each
$T$ in $\oh$ as above, the continuity of components of $u$ imply that
the resulting global function $\piCurl u$ in $\Vp 1.$ Similarly, when
$\og$ has components in the Lagrange finite element space, $\piDiv
\og$ is in $\Vp 2.$

The unisolvency of these degrees of freedom follow
from~\cite[Theorem~5.5]{ArnolFalkWinth10} after identifying the
degrees of freedom given there (for $k=1,2$) in terms of our proxies
using Proposition~\ref{prop:proxies_algebra}. In particular, $\piCurl$
and $\piDiv$ can be viewed as proxies of $\pihk$ for $k=1$ and~$2$ in
four dimensions.

\section{Numerical results}   \label{sec:numer}

In this section, we report the results of numerical experiments
obtained using our implementation of the preconditioners in 4D.  We
implemented the lowest order finite element subspaces of $H^1(\om)$,
$H(\Curl, \om)$, $H(\Div, \om)$ and $H(\div, \om)$ on general
unstructured (conforming) meshes of 4-simplices. The preconditioners
were built atop this discretization. Below we will perform
verification of the discretization as well as report on the
performance of the preconditioners.

\subsection{Convergence studies}
\label{SubSection:ConvergenceStudies}

In the first series of examples, we fix $\om = (0,1)^4$ and solve
the linear systems arising from the lowest order finite element
discretization of the following problem: Find $u \in H(d, \om, \F k),$
such that
\begin{align}
\label{eq:numerprob}
  (u,v) + ( du, dv) = \langle F, v \rangle \qquad \text{for all } v \in H(d, \om, \F k),
\end{align}
where $F \in H(d, \om, \F k)'$ is a bounded linear functional given
below for each $k=0,1,2,3$.  The domain $\om$ was initially subdivided
into a mesh $\oh$ of 96 4-simplices of uniform size (see also
\cite{NeumuellerSteinbach11}).  Afterwards we apply successive
refinement based on the algorithm of Freudenthal (see \cite{Bey00,
  Freudenthal42, NeumuellerSteinbach11} for more details).  The
arising linear systems are solved using preconditioned conjugate
gradient iterations, where the preconditioner is set to the ones given
in \S\ref{Definition:Preconditioner} for each $k$. In all the
presented experiments set the smoother $D_h$ by three steps of a
Chebyshev smoother with respect to the operator $A$. We iterate until
a relative residual error reduction of $10^{-6}$ is obtained.  Under
these numerical settings, we study two types of convergence, namely
the convergence rates of the lowest order 4D discretizations, and the
iterative convergence of the preconditioned conjugate gradient
iterations.

To establish a baseline, we start with $0$-forms, i.e. $d =
\grad$. The $F$ in~\eqref{eq:numerprob} is set so that the exact
solution is 
\begin{align*}
  u(x) = \cos(\pi x_1) \cos(\pi x_2) \cos(\pi x_3) \cos(\pi x_4).
\end{align*}
The $L^2(\om)$ distance between this $u$ and the computed solution
$u_h$ in the 4D lowest order Lagrange finite element space is reported
in one of the columns of Table~\ref{Table:Experiment1:H(grad)}.
Clearly the observed convergence rate is close to two, the best
possible rate for this approximation space.  For solving the linear
systems we set the preconditioner to the algebraic multigrid
preconditioner BoomerAMG of the hypre package~\cite{HensoYang02}.  The
iteration counts reported in the last column of the same table show
small iteration numbers with small growth.  Recall that one of the
basic assumptions in the auxiliary space preconditioner construction
is that we have a good preconditioner for the Laplacian. Therefore
this report of the performance of BoomerAMG in 4D gives us a measure
of how well this baseline assumption is verified in practice.

For $1$-forms, i.e. $d=\Curl$, we set an $F$ in \eqref{eq:numerprob}
that yields the exact solution
\begin{align*}
 u(x) = [ s_1 c_2 c_3 c_4, -c_1 s_2 c_3 c_4, c_1 c_2 s_3 c_4, -c_1 c_2 c_3 s_4 ]^\top,
\end{align*}
where  $c_i = \cos(\pi x_i)$ and $s_i = \sin(\pi x_i)$ for
$i=1,\ldots,4$. In Table \ref{Table:Experiment1:H(Curl)} we summarize
the convergence results for the lowest order finite elements, i.e.,
edge-elements in 4D.  Here, we again observe a convergence rate close
to the theoretically expected rate of one. For solving the linear
system we use the proposed preconditioner given in Subsection
\ref{Definition:Preconditioner}. Here we obtain small iteration counts. But
observe that they are slightly increasing. We believe this is due to
the fact that the BoomerAMG's  performance (reported in the
previous table) is not strictly uniform.

When considering $2$-forms, i.e. $d=\Div$ we use the manufactured
solution
\begin{align*}
 u(x) = \begin{bmatrix}
  0 & c_1 c_2 s_3 s_4 & -c_1 s_2 c_3 s_4 & c_1 s_2 s_3 c_4 \\
  -c_1 c_2 s_3 s_4 & 0 & s_1 c_2 c_3 s_4 & -s_1 c_2 s_3 c_4 \\
  c_1 s_2 c_3 s_4 & -s_1 c_2 c_3 s_4 & 0 & s_1 s_2 c_3 c_4 \\
  -c_1 s_2 s_3 c_4 & s_1 c_2 s_3 c_4 & -s_1 s_2 c_3 c_4 & 0
 \end{bmatrix},
\end{align*}
where $c_i, s_i$ are as above.  Using the lowest order finite elements
for $2$-forms in 4D, we observe in Table
\ref{Table:Experiment1:H(Div)} the optimal convergence rate of
one. Moreover the preconditioner given in Subsection
\ref{Definition:Preconditioner} leads to similar iteration numbers as
in the previous example.

\begin{footnotesize}
\begin{table}
  \begin{center}
  \begin{tabular}{c|r|r r|cc|c}
    level & cores & elements & dof & $||u - u_h||_{L^2(\Omega)}$ & eoc & iter \\ \hline
    0 &     16 &     96 &     25 & 2.21816E-1 & - & 6\\
    1 &     16 &    1 536 &     169 & 1.64804E-1 & 0.43 & 11\\
    2 &     128 &    24 576 &    1 681 & 7.38929E-2 & 1.16 & 15\\
    3 &    16 384 &    393 216 &    21 025 & 2.63863E-2 & 1.49 & 18\\
    4 &    32 768 &   6 291 456 &    297 025 & 7.77695E-3 & 1.76 & 20\\
    5 &    32 768 &   100 663 296 &   4 464 769 & 2.06687E-3 & 1.91 & 22\\
    6 &    32 768 &  1 610 612 736 &   69 239 041 & 5.26637E-4 & 1.97 & 25\\ \hline
  \end{tabular}
  \end{center}   
\caption{Convergence result and iteration numbers for $H(\grad, \om, \RRR)$.}
\label{Table:Experiment1:H(grad)}
\end{table}
\begin{table}
  \begin{center}
  \begin{tabular}{c|r|r r|cc|c}
  level & cores & elements & dof & $||u - u_h||_{L^2(\Omega)}$ & eoc & iter \\ \hline
0 &     16 &     96 &     144 & 3.77018E-1 & - & 10 \\
1 &     16 &    1 536 &    1 512 & 3.05056E-1 & 0.31 & 15 \\
2 &     128 &    24 576 &    19 344 & 1.92098E-1 & 0.67 & 18 \\
3 &    16 384 &    393 216 &    276 000 & 1.10859E-1 & 0.79 & 22 \\
4 &    32 768 &   6 291 456 &   4 167 744 & 5.94185E-2 & 0.90 & 25 \\
5 &    32 768 &   100 663 296 &   64 774 272 & 3.05043E-2 & 0.96 & 28 \\
6 &    32 768 &  1 610 612 736 &  1 021 411 584 & 1.53829E-2 & 0.99 & 35 \\ \hline
  \end{tabular}
  \end{center}
\caption{Convergence result and iteration numbers for $H(\Curl, \om, \RRR^4)$.}
\label{Table:Experiment1:H(Curl)}
\end{table}
\begin{table}
  \begin{center}
  \begin{tabular}{c|r|r r|cc|c}
  level & cores & elements & dof & $||u - u_h||_{L^2(\Omega)}$ & eoc & iter \\ \hline
0 &     16 &     96 &     312 & 4.40957E-1 & - & 16 \\
1 &     16 &    1 536 &    4 032 & 4.58250E-1 & - & 26 \\
2 &     128 &    24 576 &    57 600 & 2.83070E-1 & 0.69 & 27 \\
3 &    16 384 &    393 216 &    869 376 & 1.53587E-1 & 0.88 & 28 \\
4 &    32 768 &   6 291 456 &   13 504 512 & 7.95018E-2 & 0.95 & 28 \\
5 &    32 768 &   100 663 296 &   212 877 312 & 4.02155E-2 & 0.98 & 29 \\
6 &    32 768 &  1 610 612 736 &  3 380 674 560 & 2.01713E-2 & 1.00 & 33 \\ \hline
  \end{tabular}
  \end{center}
\caption{Convergence result and iteration numbers for $H(\Div, \om, \KKK)$.}
\label{Table:Experiment1:H(Div)}
\end{table}
\begin{table}
  \begin{center}
  \begin{tabular}{c|r|r r|cc|c}
  level & cores & elements & dof & $||u - u_h||_{L^2(\Omega)}$ & eoc & iter \\ \hline
0 &     16 &     96 &     288 & 3.74239E-1 & - & 9 \\
1 &     16 &    1 536 &    4 224 & 1.94337E-1 & 0.95 & 18 \\
2 &     128 &    24 576 &    64 512 & 9.76605E-2 & 0.99 & 19 \\
3 &    16 384 &    393 216 &   1 007 616 & 4.88885E-2 & 1.00 & 19 \\
4 &    32 768 &   6 291 456 &   15 925 248 & 2.43047E-2 & 1.01 & 19 \\
5 &    32 768 &   100 663 296 &   253 231 104 & 1.20968E-2 & 1.01 & 20 \\
6 &    32 768 &  1 610 612 736 &  4 039 114 752 & 6.03196E-3 & 1.00 & 24 \\ \hline
  \end{tabular}
  \end{center}
\caption{Convergence result and iteration numbers for $H(\div, \om, \RRR^4)$.}
\label{Table:Experiment1:H(div)}
\end{table}
\end{footnotesize}

For $3$-forms, i.e. $d=\div$ we consider the exact solution
\begin{align*}
 u(x) = [ c_1 s_2 s_3 s_4, s_1 c_2 s_3 s_4, s_1 s_2 c_3 s_4, s_1 s_2 s_3 c_4 ]^\top.
\end{align*}
For these lowest order Raviart-Thomas finite elements in 4D, we again
observe the correct convergence rate of one from Table
\ref{Table:Experiment1:H(div)}.  The iteration numbers for the auxiliary space
preconditioner again exhibit a small growth.

\begin{footnotesize}
\begin{table}
  \begin{center}
  \begin{tabular}{c|ccccccccccccc}
          & \multicolumn{13}{c}{weight $\tau$} \\
	  L & $10^{-6}$ & $10^{-5}$ & $10^{-4}$ & $10^{-3}$ & $10^{-2}$ & $10^{-1}$ & 1 & 10 & $10^{2}$ & $10^{3}$ & $10^{4}$ & $10^{5}$ & $10^{6}$ \\ \hline
    0 & 5 & 5 & 6 & 6 & 7 & 7 & 6 & 5 & 9 & 11 & 12 & 12 & 12 \\
    1 & 10 & 11 & 12 & 12 & 12 & 12 & 11 & 9 & 5 & 10 & 12 & 12 & 12 \\
    2 & 15 & 15 & 15 & 15 & 15 & 15 & 15 & 13 & 8 & 9 & 12 & 12 & 12 \\
    3 & 19 & 19 & 19 & 19 & 19 & 19 & 18 & 17 & 13 & 7 & 10 & 12 & 12 \\
    4 & 20 & 20 & 20 & 20 & 20 & 20 & 20 & 19 & 15 & 9 & 9 & 11 & 12 \\
    5 & 23 & 23 & 23 & 23 & 23 & 23 & 22 & 21 & 16 & 11 & 6 & 8 & 11 \\
    6 & 27 & 27 & 27 & 27 & 27 & 26 & 25 & 23 & 17 & 12 & 8 & 5 & 9 \\ \hline
  \end{tabular}
  \end{center}
\caption{Iteration numbers for different number of weights $\tau$ and refinements for the space $H(\grad, \om, \RRR)$.}
\label{Table:Experiment2:H(grad)}
\end{table}
\begin{table}
  \begin{center}
  \begin{tabular}{c|ccccccccccccc}
          & \multicolumn{13}{c}{weight $\tau$} \\
	  L & $10^{-6}$ & $10^{-5}$ & $10^{-4}$ & $10^{-3}$ & $10^{-2}$ & $10^{-1}$ & 1 & 10 & $10^{2}$ & $10^{3}$ & $10^{4}$ & $10^{5}$ & $10^{6}$ \\ \hline
	  0 & 11 & 11 & 12 & 12 & 11 & 11 & 10 & 11 & 16 & 18 & 19 & 19 & 19 \\
	  1 & 14 & 15 & 15 & 15 & 14 & 15 & 15 & 14 & 19 & 24 & 27 & 27 & 27 \\
	  2 & 21 & 19 & 19 & 18 & 17 & 19 & 18 & 15 & 16 & 21 & 28 & 29 & 29 \\
	  3 & 23 & 21 & 20 & 19 & 21 & 23 & 22 & 20 & 15 & 18 & 22 & 26 & 26 \\
	  4 & 22 & 21 & 20 & 21 & 23 & 24 & 25 & 22 & 16 & 14 & 20 & 25 & 28 \\
	  5 & 22 & 23 & 25 & 24 & 27 & 29 & 28 & 25 & 18 & 14 & 16 & 22 & 29 \\
	  6 & 28 & 29 & 29 & 31 & 32 & 33 & 35 & 29 & 22 & 14 & 13 & 19 & 27 \\ \hline
  \end{tabular}
  \end{center}
\caption{Iteration numbers for different number of weights $\tau$ and refinements for the space $H(\Curl, \om, \RRR^4)$.}
\label{Table:Experiment2:H(curl)}
\end{table}
\begin{table} 
  \begin{center}
  \begin{tabular}{c|ccccccccccccc}
          & \multicolumn{13}{c}{weight $\tau$} \\
	  L & $10^{-6}$ & $10^{-5}$ & $10^{-4}$ & $10^{-3}$ & $10^{-2}$ & $10^{-1}$ & 1 & 10 & $10^{2}$ & $10^{3}$ & $10^{4}$ & $10^{5}$ & $10^{6}$ \\ \hline
	  0 & 12 & 13 & 14 & 14 & 15 & 15 & 16 & 17 & 21 & 23 & 23 & 23 & 23 \\
	  1 & 20 & 21 & 22 & 23 & 24 & 26 & 26 & 26 & 25 & 31 & 34 & 34 & 34 \\
	  2 & 20 & 21 & 23 & 24 & 26 & 27 & 27 & 27 & 26 & 27 & 34 & 35 & 35 \\
	  3 & 20 & 21 & 23 & 24 & 26 & 27 & 28 & 27 & 26 & 24 & 27 & 30 & 31 \\
	  4 & 20 & 21 & 22 & 24 & 26 & 27 & 28 & 27 & 26 & 25 & 24 & 28 & 30 \\
	  5 & 22 & 22 & 24 & 25 & 27 & 28 & 29 & 28 & 25 & 25 & 24 & 25 & 31 \\
	  6 & 28 & 28 & 30 & 30 & 31 & 32 & 33 & 30 & 26 & 23 & 24 & 21 & 28 \\ \hline
  \end{tabular}
  \end{center}
\caption{Iteration numbers for different number of weights $\tau$ and refinements for the space $H(\Div, \om, \KKK)$.}
\label{Table:Experiment2:H(Div)}
\end{table}
\begin{table}
  \begin{center}
  \begin{tabular}{c|ccccccccccccc}
          & \multicolumn{13}{c}{weight $\tau$} \\
	  L & $10^{-6}$ & $10^{-5}$ & $10^{-4}$ & $10^{-3}$ & $10^{-2}$ & $10^{-1}$ & 1 & 10 & $10^{2}$ & $10^{3}$ & $10^{4}$ & $10^{5}$ & $10^{6}$ \\ \hline
	  0 & 8 & 8 & 9 & 9 & 9 & 9 & 9 & 10 & 13 & 14 & 14 & 14 & 14 \\
	  1 & 20 & 20 & 19 & 19 & 18 & 17 & 18 & 18 & 17 & 16 & 17 & 17 & 17 \\
	  2 & 22 & 21 & 20 & 19 & 18 & 18 & 19 & 20 & 20 & 16 & 17 & 17 & 17 \\
	  3 & 21 & 20 & 18 & 17 & 17 & 18 & 19 & 20 & 20 & 18 & 15 & 16 & 16 \\
	  4 & 19 & 18 & 17 & 16 & 17 & 18 & 19 & 20 & 19 & 19 & 16 & 15 & 16 \\
	  5 & 18 & 20 & 20 & 18 & 18 & 19 & 20 & 20 & 19 & 18 & 18 & 14 & 16\\
	  6 & 22 & 22 & 22 & 22 & 23 & 23 & 24 & 23 & 20 & 16 & 17 & 16 & 15\\ \hline
  \end{tabular}
  \end{center}
\caption{Iteration numbers for different number of weights $\tau$ and refinements for the space $H(\div, \om, \RRR^4)$.}
\label{Table:Experiment2:H(div)}
\end{table}  
\end{footnotesize}

\subsection{Parameter robustness}
\label{SubSection:ParameterStudies}

In the following experiments we will study the preconditioners when a
parameter $\tau > 0$ is involved, namely instead
of~\eqref{eq:numerprob}, we consider the following problem:
Find
$u \in H(d, \om, \F k),$ such that
\begin{align*}
  \tau (u,v) + ( du, dv) = \langle F, v \rangle \qquad \text{for all } v \in H(d, \om, \F k),
\end{align*}
where $F$ is set for each $k$ as described previously. All other
parameters, including the domain and stopping criterion, are set as in
the previous experiments.  The results summarized in Tables
\ref{Table:Experiment2:H(grad)}-\ref{Table:Experiment2:H(div)} show 
iteration numbers for the preconditioned conjugate gradient method.
For weights $\tau$ ranging from $10^{-6}$ to $10^6$ we observe quite
small iteration numbers which vary only slightly with $\tau$.  For
small values of~$\tau$, these observations are consistent with the
analytical conclusions of Theorem~\ref{thm:main}.



\section*{Conclusion} 

We presented the auxiliary space preconditioning technique in
arbitrary dimensions. The presentation extends previous results of
Hiptmair and Xu \cite{HiptmXu07} using  recent estimates
on regularized homotopy 
operators~\cite{CostaMcInt09, MitreMitreMonni08} and 
recent developments in finite
element exterior calculus~\cite{ArnolFalkWinth10}.  Although, we only
analyzed the additive version of the auxiliary space preconditioners,
their multiplicative versions can be similarly derived and analyzed.

This work also provides an implementation of the 4D auxiliary space
preconditioners (currently available as one of the public domain
development branches of \cite{mfem}). During this work, we also
implemented the finite element subspaces (currently the lowest
order ones) of the 4D de Rham complex, their canonical interpolants
using proxy identities, and attendant 4D mesh operations. Use of this
technology for spacetime applications and the addition of geometric
multigrid to the tool set are subjects of ongoing research.

\end{document}